\spnewtheorem*{algorithm}{Algorithm}{\bf}{\rm}
 \numberwithin{equation}{section}
\newcommand{\uu}{\underline{u}}
\newcommand{\uX}{\underline{X}}
\newcommand{\btv}{\bm{TV}}
\newcommand{\bfD}{\bm{D}}
\newcommand{\Exp}{\mathrm{Exp}}
\newcommand{\Dist}{\mathrm{Dist}}
\newcommand{\Diam}{\mathrm{Diam}}
\newcommand{\Lip}{\mathrm{Lip}}
\newcommand{\lfs}{\mathrm{lfs}}
\newcommand{\RR}{\mathbb{R}}
\newcommand{\id}{\,\mathrm{d}}
\newcommand{\relmiddle}[1]{\mathrel{}\middle#1\mathrel{}}
\newcommand{\shrink}{\operatorname{shrink}}
\newcommand{\inner}[3]{\left\langle#1,#2\right\rangle_{#3}}
\newcommand{\argmin}{\mathop{\mathrm{arg~min}}\limits}
\newcommand{\norm}[1]{\left\|#1\right\|}
\newcommand{\pmt}[1]{
    \begin{pmatrix}
        #1
    \end{pmatrix}
}
\begin{document}

\title{A new numerical scheme for discrete constrained total variation flows and its convergence}

\author{Yoshikazu Giga \and Koya Sakakibara \and Kazutoshi Taguchi \and Masaaki Uesaka}

\institute{
Y.~Giga \at
Graduate School of Mathematical Sciences, The University of Tokyo, 3-8-1 Komaba, Meguro-ku, Tokyo 153-8914, Japan\\
\email{labgiga@ms.u-tokyo.ac.jp}
\and
K.~Sakakibara \at
Graduate School of Science, Kyoto University, Kitashirakawa Oiwake-cho, Sakyo-ku, Kyoto 606-8502, Japan;
RIKEN iTHEMS, 2-1 Hirosawa, Wako-shi, Saitama 351-0198, Japan\\
\email{ksakaki@math.kyoto-u.ac.jp}
\and
K.~Taguchi \at 
Graduate School of Mathematical Sciences, The University of Tokyo\\
\email{ktaguchi@ms.u-tokyo.ac.jp}
\and
M.~Uesaka \at
Graduate School of Mathematical Sciences, The University of Tokyo, 3-8-1 Komaba, Meguro-ku, Tokyo 153-8914, Japan;
Arithmer Inc., Minato-Ku, Tokyo 106-6040, Japan\\
\email{muesaka@ms.u-tokyo.ac.jp}
}

\date{Received: date / Accepted: date}

\maketitle

\begin{abstract}
    In this paper, we propose a new numerical scheme for a spatially discrete model of total variation flows whose values are constrained to a Riemannian manifold.
    The difficulty of this problem is that the underlying function space is not convex; hence it is hard to calculate a minimizer of the functional with the manifold constraint even if it exists.
    We overcome this difficulty by ``localization technique''  using the exponential map and prove a finite-time error estimate.  
    Finally, we show a few numerical results for the target manifolds $S^2$ and $SO(3)$.
    \keywords{Total variation flow \and Manifold constraint \and Spatially discrete model}
    \subclass{35K45 \and 35K55 \and 65M60}
\end{abstract}

\section{Introduction}
We are concerned with a numerical scheme for solving a spatially discrete constrained total variation flow proposed by Giga and Kobayashi \cite{Giga2003}, which is designated as $(\mathrm{DTVF}_{\mathrm{GK}};u_0)$ and called the discrete Giga--Kobayashi (GK) model in the present paper (see Definition \ref{Def1} below).

A general constrained total variation flow (constrained TV flow for short) for $u \colon \Omega \times [0,T) \to M$ is given as
\begin{equation}
\label{eq:original_system}
(\mathrm{TVF};u_0)
    \left\{
    	\begin{array}{lcl}
    		\displaystyle \frac{\partial u}{\partial t} = -\pi_{u}\left( -\nabla \cdot \left(\frac{\nabla u}{|\nabla u|} \right) \right)  &\mbox{in} &\ \Omega \times (0,T), \\
    		\displaystyle \left( \frac{\nabla u}{|\nabla u|} \right) \cdot \nu^{\Omega} = 0 &\mbox{in} &\  \partial\Omega \times (0,T), \\
    		\displaystyle u|_{t=0} = u_0 & \mbox{on} &\  \Omega,
    	\end{array}
    \right.
\end{equation}
where $\Omega \subset \RR^k\,(k \ge 1)$ is a bounded domain with Lipschitz boundary $\partial \Omega$, $M$ a manifold embedded into $\RR^{\ell}\,(\ell \geq 1)$, $u_0 : \Omega \to M$ an initial datum, $\pi_p$ the orthogonal projection from the tangent space $T_p\RR^{\ell} (=\RR^{\ell})$ to the tangent space $T_pM(\subset \RR^{\ell})$ at $p \in M$, $\nu^{\Omega}$ the outer normal vector of $\partial \Omega$ and $T > 0$.
If $\pi_u$ is absent, $(\mathrm{TVF}; u_0)$ is the standard vectorial total variation flow regarded as the $L^2$-gradient flow of the isotropic total variation of vector-valued maps:
\begin{equation*}
    \btv(u) := \int_{\Omega}|\nabla u|_{\RR^{k\times\ell}}\id x.
\end{equation*}
The introduction of $\pi_u$ means that we impose a restriction on the gradient of total variation so that $u$ always takes value in $M$.
The constrained TV flow is also called the ``$M$-valued TV flow'' or ``1-harmonic map flow''. 
In some literature, the equation \eqref{eq:original_system} is called the constrained TV flow equation or system while the constrained TV flow itself means its solution.
However, in this paper we do not distinguish ``flow'' and ``flow equations''.

The discrete GK model is the constrained gradient flow of the total variation in the space of piecewise constant mappings defined in a given partition of $\Omega$.
Originally, \cite{Giga2003} proposed the one space dimensional model.
Later, the high-dimensional case is studied in \cite{Taguchi2018}.
It is formally a system of ordinary differential equations, but the velocity is singular with respect to its variables.
Moreover, because of the presence of manifold constraints, it is impossible to interpret the problem as a gradient flow of a convex function.
The goal of this paper is to give a time-discrete scheme which is not only practically easy to calculate but also converges to the solution of the discrete GK model.

The readers might be interested whether this (spatially) discrete GK model converges to the constrained TV flow \eqref{eq:original_system} if the space grid tends to zero.
This problem is widely open and is not simple even for unconstrained case as discussed at the end of Section \ref{sec:numerical_schemes}.

\subsection{Applications in science and engineering} 
Constrained TV flows $(\mathrm{TVF};u_0)$ have applications in several fields.
The first application  of the flow of this type appears in \cite{Tang2001}, 
where the authors consider the two-dimensional sphere as  the target manifold $M$ in order to denoise  color images while preserving brightness. 
The system with  the target manifold $M$ of all three-dimensional rotations $SO(3)$, is  an important prototype of the continuum model for time-evolution of grain boundaries in a crystal,
proposed in \cite{Kobayashi2000,Kobayashi2005}.
Moreover, equation \eqref{eq:original_system}, where  the target manifold $M$ is the space of all symmetric positive definite three-dimensional matrices $SPD(3)$, is  proposed for denoising MR diffusion tensor image  (\cite{Basser1994,Pennec2006,Christiansen2007,Weinmann2014}).

\subsection{Mathematical analysis} 
Despite its applicability, the mathematical analysis of  the manifold-constrained TV flows is still developing.
Two difficulties lie in mathematical analysis:
One is the singularity of the system when  $\nabla u$ vanishes; the other  is the constraint with values of flows in  a manifold.
Many  studies can be found to overcome the first difficulty.
In order to explain the second difficulty, we distinguish two types of solutions:  ``regular solution'' and ``irregular solution''.

We mean by ``regular solution'' a solution without jumps.
In \cite{Giga2005}, the existence of local-in-time regular solution was proved when $\Omega$ is the  $k$-torus $T^{k}$, the  manifold $M$ is an $(\ell-1)$-sphere $S^{\ell-1}$ and the  initial datum $u_0$ is sufficiently smooth and of small total variation.
Recently, this work has been improved significantly in \cite{Giacomelli2017}.
In particular, the assumption has been weakened to convex domain $\Omega$ and Lipschitz continuous initial data $u_0$.
Moreover,  in \cite{Giacomelli2017}, the existence of global-in-time regular solution and its uniqueness have been proved when the target manifold $M$ has non-positive curvature, and the initial datum  $u_0$ is small.

In \cite{GigaKuroda2004}, it has been proved that rotationally symmetric solutions may break down, that is, lose their  smoothness in finite time
when $\Omega$ is the two-dimensional unit disk and $M=S^2$.
Subsequently, in \cite{Giacomelli2010},
the optimal blowup criterion for the initial datum given in \cite{GigaKuroda2004} was found, and it was proved that the so-called reverse bubbling blowup might happen.

For ``irregular solution'', a solution that may have jumps, two notions of a solution  are proposed, depending on the choice of the distance to measure jumps of the function.
These choices lead to different notions of a solution of a constrained gradient system, which may not coincide.

Weak  solutions derived from ``extrinsic distance'' or ``ambient distance'', the distance of the Euclidean space in which the manifold $M$ is embedded, were studied in \cite{Giga2003,Giga2015}.
In \cite{Giga2003}, the  existence and the uniqueness of global-in-time weak solution in the space of piecewise constant functions,  have been established when $M$ is compact,
the domain $\Omega$ is an interval with finite length,
and the initial datum $u_0$ is piecewise constant.
Moreover, a finite-time stopping phenomenon  of $S^1$-valued TV flows was also proved.
On the contrary, for $S^2$-valued TV flows,
an example that does not stop in finite time was constructed in \cite{Giga2015},
which will be reproduced numerically by our new numerical scheme
and used for its numerical verification in this paper.

Weak solutions derived from ``intrinsic distance'',
the geodesic distance of the target manifold $M$,
were  studied in \cite{Giacomelli2013,Giacomelli2014,Giacomelli2016}.
In \cite{Giacomelli2013}, the existence and the uniqueness of global-in-time weak solution have been proved
when $\Omega$ is a bounded domain with Lipschitz boundary, $M=S^1$,
and the initial datum $u_0$ has finite total variation
and does not have jumps greater than $\pi$.
These  arguments and results were extended in \cite{Giacomelli2016} 
when the target manifold $M$ is a planar  curve.
As for higher  dimensional target manifolds,
the existence of global-in-time weak solution was proved in \cite{Giacomelli2014}
when the target manifold $M$ is a hyperoctant $S^{\ell-1}_+$ of the  $(\ell-1)$-sphere.

If one takes the intrinsic distance, the uniqueness of solution may fail to hold as pointed out in \cite{Taguchi2018}.
This is one reason why we adopt the extrinsic distance.

\subsection{Numerical analysis and computation} 
Discrete constrained TV flows, that is, spatially discrete models of constrained TV flows, have been studied in \cite{Osher2002,Giga2003,Giga2006,Feng2008,Feng2009,Taguchi2018}.

In \cite{Osher2002}, discrete models of $S^{1}$-valued TV flows and $S^{2}$-valued TV flows based on the finite difference method were studied.
More precisely, a  numerical scheme was proposed, and numerical computations were performed. 

In \cite{Feng2008,Feng2009}, $S^{\ell-1}$-valued regularized TV flows based on the finite element method were studied.
In these  works, the existence and uniqueness of global-in-time solution of the discrete models were established, and numerical computations were performed.
We remark that the convergence of the discrete model to the original model was also studied.
However, its argument has some flaws, which were pointed out  in \cite{Giacomelli2013}.

In  \cite{Giga2003,Giga2005,Giga2006,Taguchi2018}, discrete models of constrained TV flows on the space of piecewise constant functions were studied.
Such  discrete models can be directly applied to denoising of manifold-valued digital images.
For  one-dimensional spatial domains, solutions of the discrete models coincide with irregular solutions of the corresponding original model derived by ambient distance but it is not the  case for  higher dimensional domainst.
In one-dimensional spatial domain, the existence and the uniqueness of global-in-time solution to the discrete models were established in \cite{Giga2003}.
Moreover, numerical computations of $S^1$-valued discrete models were performed.
These discrete models are formulated as ordinary differential inclusions; i.e.,  differential equation with multi-valued velocity.
There are two key ideas in \cite{Giga2003} to solve them.
The first  one is computation of the canonical restriction of the multi-valued velocity.
The second  one is to use facet-preserving property  of flows.
We emphasize  that these two key ideas do not work in dimensions higher  than one.
In the higher dimensional  case, the existence and the uniqueness of global-in-time solution of the discrete model were established in \cite{Giga2005,Giga2006,Taguchi2018}.
Numerical computations of these discrete models, however, were not performed. 

\subsection{Contribution of this paper} 
This paper is dedicated to the  study of a numerical scheme for simulation of the discrete model studied  in \cite{Giga2005,Giga2006,Taguchi2018}, of constrained TV flow. 
In particular, we propose a new numerical scheme and show its convergence.
We also perform numerical simulations based on the proposed scheme. 
We outline the three main contributions below:

\subsubsection{New numerical scheme} 
Constrained discrete TV flow is formulated as a gradient flow in a suitable manifold.
Hence, one can use the minimizing movement scheme (see \cite{Ambrosio2005}) to simulate it.
It is summarized as follows: 
Let $H := (H, \langle \cdot, \cdot \rangle_H)$ be a real Hilbert space, $E$ be a submanifold of $H$, $\mathcal{F}$ be a  real-valued functional on $H$ allowing the value $+\infty$, $I := [0,T]$ be a time interval and $\tau>0$ be a step size.
We first consider a sequence $\{u_\tau^{(n)}\}$ in $E$ generated by a simple minimizing movement scheme for $\mathcal{F}$, which will be referred as (MM; $\tau$, $u_0$) in Section \ref{sec:numerical_schemes}.
In this scheme, $\{u_\tau^{(n)}\}$ is determined successively by taking a minimizer in $E$ of a functional
\[
	\tau\mathcal{F}(u)+\frac{1}{2}\|u-u_\tau^{(n-1)}\|_H^2.
\]
Note that the uniqueness of minimizers is not guaranteed since $E$ is not a convex constraint.
In general, if $\mathcal{F}$ were geodesically convex and coercive on $E$, then the piecewise linear interpolation (Rothe interpolation) of $\{u_{\tau}^{(n)}\}$ would converge to the gradient flow of $\mathcal{F}$ (see \cite{Ambrosio2005}) and this general theory could be applied.
However, this is not the case in our situation.
In this simple scheme, we need to solve an  optimization problem at each step.
Each optimization problem is classified as Riemannian optimization problem, i.e., an  optimization problem with Riemanninan manifold constraint.
Theory of \emph{smooth} Riemannian optimization, that is, Riemannian optimization problem whose objective function is smooth is well-studied, and we refer to the systematically  summarized book \cite{Absil2008}. 
However, our energy $\mathcal{F}$ is the total variation energy so it is not smooth.
Unfortunately, \emph{non-smooth}  Riemannian optimization is only  scarcely explored as a subfield of the theory of Riemannian optimization.
We refer to \cite{Osher2014,Wang2015,Kovnatsky2016,Zhang2017} as its references.
Moreover, the theory of non-smooth Riemannian optimization is generalized to the theory of non-smooth and non-convex optimization with a separable structure in \cite{Li2015}.
Although there are several studies in this direction, it is under development as compared with the linearly constrained problem.
For these reasons in this paper, we propose a new minimizing movement scheme which includes only a linearly constrained optimization problem, which is referred as $(\mathrm{MM}_{loc};\tau,u_0)$ in Section \ref{sec:numerical_schemes}.
In this scheme, instead of minimizing
\[
	\tau\mathcal{F}(u)+\frac{1}{2}\|u-u_\tau^{(n-1)}\|_H^2
\]
in $E$, we minimize
\[
	\tau\mathcal{F}(u_\tau^{(n-1)}+X)+\frac{1}{2}\|X\|_H^2
\]
defined for $X$ which is an element of the tangent space $T_{u_\tau^{(n-1)}}E$ contained in $H$.
This is a linearly constrained (convex) optimization problem, which will be referred as $(\mathrm{VP}_{loc};u_\tau^{(n-1)})$.
There is a unique minimizer.
Let $X_\tau^{(n-1)}$ be the minimizer.
We then determine $u_\tau^{(n)}$ by the exponential map of $X_\tau^{(n-1)}$ at $u_\tau^{(n-1)}$.

\subsubsection{Stability and convergence } 
In this paper, we study stability and convergence  of the scheme from  two points of view.
Those are ``discrete  energy dissipation" and ``error estimate" for the proposed scheme.
Since discrete TV flows have the structure of gradient flows, the proposed scheme should inherit the properties of the gradient flow.
Therefore, in this paper, we show that if $\tau$ is sufficiently small, the proposed scheme satisfies the energy dissipation inequality, which is one of the properties of the gradient flow.

Moreover,  we prove that the sequence $\{ u_{\tau}^{(n)}\}$ generated by the modified minimizing movement scheme converges to the original  gradient flow as $\tau \to 0$. 
The  convergence result of the minimizing movement scheme appears in~\cite{Ambrosio2005,Crandall1971,Rulla1996}.
In the classical work~\cite{Crandall1971} on  the convergence analysis in Banach space,
error estimate of order $O(\sqrt{\tau})$  was obtained, and it is improved to the order $O(\tau)$  in~\cite{Rulla1996}. 
A  similar estimate to~\cite{Crandall1971} in general metric space is shown in~\cite{Ambrosio2005}.
Since our scheme contains the ``localization'' process, we cannot apply these previous works to our scheme directly.
We can show, however, the error estimate locally in time between the piecewise linear interpolation, the  so-called Rothe  interpolation, of $\{u_{\tau}^{(n)}\}$,  and the original flow.
This estimate, which will be presented  in Section~\ref{sec:convergence}, states that the error of the Rothe interpolation is $O(\sqrt{\tau})$ as $\tau \to 0$, and corresponds  to those in~\cite{Ambrosio2005,Crandall1971}.
We remark that the idea  using the exponential map appears in the optimization problem in matrix manifolds and in numerical computation of regularizing flows like constrained heat flows (see~\cite{Absil2008,Faugeras2002,Faugeras2004}).
We are concerned, however, with the approximation of the gradient flow in addition to converging to a  minimizer.  
As far as we know, there is no previous rigorous result on the convergence of the  flow itself.

\subsubsection{Numerical simulations} 
The proposed scheme is not enough to simulate constrained discrete TV flow since we need to solve the linearly constrained convex but non-smooth optimization problem $(\mathrm{VP}_{loc};u^{(n-1)}_{\tau})$ at each step.
We overcome this situation by rewriting $(\mathrm{VP}_{loc}; u^{(n-1)}_{\tau})$ as an iteration, and adopt alternating split Bregman iteration, proposed by~\cite{Goldstein2009}, which is adequate to the optimization problem including total variation.
We refer to~\cite{Oberman2011,Pozar2018} for examples of the application of this iteration to calculate the mean curvature flow numerically.
One can find a proof of convergence of this iteration in \cite{Setzer2009}.

In this paper, we provide a numerical analysis of  three aspects  of constrained TV flows.
The first one is a property of $S^2$-valued TV flow, discovered in \cite{Giga2015}, of not stopping in finite time. 
The second one is an error estimate of the proposed scheme; actually,  the example in \cite{Giga2015} can be rewritten as a simple  ordinary differential equation, which can be accurately solved by an explicit scheme.  
Therefore, we can confirm the theoretical convergence rate by comparing the results obtained by the two numerical methods. 
The third one is the  numerical observation that a facet is preserved  in most of evolution.
We simulate $S^2$-valued TV flows with one spatial dimension constructed  in \cite{Giga2015},  and $SO(3)$-valued TV flows in  two spatial dimensions. 

\subsubsection{Advantages} 
The proposed scheme has five advantages:

First, this scheme does not restrict the target manifold $M$.
In many previous studies, the target manifold $M$ is fixed in advance, to the  sphere,  for example.
Our method, however, can be applied to  any Riemannian manifold as target manifold $M$.

Second, this scheme is not restricted to constrained TV flows and can be applied to more general constrained gradient flows.
Indeed, as we  can see from the scheme $(\mathrm{MM}_{loc}; \tau, u_0)$, it is possible to construct a numerical solution $\{ u_{\tau}^{(n)} \}$ if we can solve the linearly constrained problem $(\mathrm{VP}_{loc}; u^{(n)}_{\tau})$ for all $n \geq 0$.

Third, the proposed scheme can describe facet-preserving phenomena of constrained TV  flows.
In the numerical calculation of the TV  flow, we should pay attention to whether a numerical scheme can adequately simulate the evolution of facets. 
Many  schemes that have been proposed so far cannot capture  this phenomenon since the energies are smoothly regularized.
On the other hand, our  scheme is capable of preserving facets  since the energies are only convexified, not regularized.  

Fourth, the proposed scheme is numerically practical.
Especially, if the exponential map and the orthogonal projection $\pi$ of the target manifold $M$ can be calculated easily, the practical advantage of our scheme is clear.
If $M$ is in the  class of orthogonal Stiefel manifolds, its orthogonal projection $\pi$ and its exponential map can be written explicitly.
See also \cite{Absil2008}.
Besides, as mentioned above, our method does not use the projection onto  the target manifold, which is sometimes hard to calculate.

Finally, the proposed scheme is well-defined, and we shall prove its convergence together with convergence rate.
 
\subsection{Organization of this paper} 
The plan of this paper is as follows:

In Section~\ref{sec:preliminaries}, we recall notion and notations for describing constrained discrete TV flows we study in this paper.
More precisely, we recall the notion of manifolds and define a space of piecewise constant functions, discrete total variation, and a discrete model of constrained TV flows
.

In Section~\ref{sec:numerical_schemes}, we propose a new numerical scheme and provide its analysis. 
Namely, in  Section~\ref{sec:derivation}, 
we derive  a new numerical scheme for constrained discrete TV flows, starting  from the minimizing movement scheme for constrained discrete TV flows.
In Section~\ref{sec:rothe_interpolation}, we explain Rothe interpolation.
This interpolation is useful for establishing the energy dissipation inequality and error estimate of the proposed scheme.
In Section~\ref{sec:energy_decay}, we prove that the proposed scheme satisfies energy dissipation inequality if the step size is sufficiently small.
In Section~\ref{sec:convergence}, we establish an error estimate of the proposed scheme.
We see that the error estimate implies that the proposed scheme converges to constrained discrete TV flows as step size tends to zero.
The key of the  proof is to establish the evolutionary  variational inequality, used in \cite{Ambrosio2005}, for Rothe interpolation of the proposed scheme.

In Section~\ref{sec:numerical}, we propose a practical algorithm to perform numerical computations.
More precisely, we rewrite the scheme into a practical version with alternating split Bregman iteration, and we use it to simulate $S^2$-valued and $SO(3)$-valued discrete TV flows in Section \ref{sec:numerical_results}.
In  Appendix~\ref{sec:the_constant}, we explain why  the constant $C_M$ defined by \eqref{eq:Curv+LSF} in Section~\ref{sec:preliminaries} is bounded by explicit quantities associated to the  submanifold $M$.

\section*{Acknowledgement}
We are grateful to Professor~Takeshi~Ohtsuka (Gumma University) for several discussions and comments on split Bregman iteration. 
Notably, he suggested to us that the alternating split Bregman iteration is suitable for the numerical experiments. 
We are also grateful to Professor Hideko Sekiguchi (The University of Tokyo) for her comments on properties of $SO(3)$. 
The authors are grateful to anonymous referees for careful reading and several constructive comments.  

The work of the second author was initiated when he was a postdoc fellow at The University of Tokyo during 2017. 
Its hospitality is gratefully acknowledged.
Much of the work of the third author was done when he was a graduate student at The University of Tokyo. 
The present paper is based on his thesis.
The work of the fourth author was initiated when he was an assistant professor at Hokkaido University during 2017 and 2018. 
Its hospitality is gratefully acknowledged.

The work of the first author was partly supported by Japan Society for the Promotion of Science (JSPS) through the grants KAKENHI No. 26220702, No. 19H00639, No. 18H05323, No. 17H01091, No. 16H0948 and by Arithmer, Inc. through collaborative research grant.
The work of the second author was partly supported by JSPS through the grant KAKENHI No. 18K13455.
The work of the third author was partly supported by the Leading Graduate Program ``Frontiers of Mathematical Sciences and Physics", JSPS.

\section{Preliminaries}\label{sec:preliminaries} 
Here and henceforth, we fix the bounded domain $\Omega$ in $\RR^k$ with Lipschitz boundary and denote by $\nu_\Omega$ the outer normal vector of $\partial\Omega$.

\subsection{Notion and notations  of submanifolds  in Euclidean space} 
Let $M$ be a $C^2$-submanifold in $\RR^\ell (\ell \ge 2)$.
For $p \in M$, we denote by $T_pM$ and to $T^{\bot}_pM$ the tangent and the normal spaces of $M$ at $p$, respectively, and write $\pi_p$ and $\pi^{\bot}_p$ for the the orthogonal projections from $\RR^{\ell}$ to $T_pM$ and $T^{\bot}_pM$, respectively.
We denote by $\mathrm{II}_p : T_pM \times T_pM \to T^{\bot}_pM$ the second fundamental form at $p$ in $M$.
Moreover, define the diameter $\Diam(M)$ and the curvature $\mathrm{Curv}(M)$ of $M$ by 
\begin{equation*}
	\Diam (M) := \sup_{p,q \in M} \norm{p-q}_{\RR^{\ell}},
	\quad
	\mathrm{Curv}(M) := \sup_{p\in M}\sup_{X \in T_pM} \frac{\mathrm{II}_p(X, X)}{\| X \|^2_{\RR^{\ell}}},
\end{equation*}
respectively. 
If $M$ is compact, then $\Diam (M)$ and  $\mathrm{Curv}(M)$ are finite.
Given a point $p \in M$ and velocity $V \in T_pM$, we consider the ordinary differential equation in $\RR^{\ell}$, the so-called geodesic equation in $M$, for $\gamma : [0,\infty) \to \RR^{\ell}$:
\begin{equation}\label{eq:sff}
    \frac{\id^2 \gamma}{\id t^2}(t) - \mathrm{II}_{\gamma(t)}\left( \frac{\id \gamma}{\id t}(t),\frac{\id \gamma}{\id t}(t) \right) = 0, 
    \quad \gamma(0) = p,
    \quad \frac{\id \gamma}{\id t}(0) = V.
\end{equation}
If $M$ is compact, then the Hopf--Rinow theorem implies that there exists a unique curve $\gamma^{p,V} : [0,\infty) \to M$ satisfying \eqref{eq:sff}. 
The curve $\gamma^{p,V}$ is called geodesic with initial point $p$ and initial velocity $V$.
Then the exponential map $\exp_p : T_pM \to M$ at $p$ is defined by the formula $\exp_p(V) := \gamma^{p,V}(1)$.
We have  $\gamma^{p,V}(t) = \exp_p(tV)$ for $t \in [0,\infty)$ since $s \mapsto \gamma^{p,tV}(s/t)$ satisfies the geodesic  equation \eqref{eq:sff} and the solution of \eqref{eq:sff} is unique.  
Moreover, set
\begin{equation}\label{eq:Curv+LSF}
	C_M := \sup_{p, q \in M}\frac{\norm{\pi^{\bot}_p(p-q)}_{\RR^{\ell}}}{\norm{p - q}_{\RR^{\ell}}^2}.
\end{equation}
If $M$ is path-connected and compact, then the constant $C_M$ is finite.
In fact, as explained in Appendix \ref{sec:the_constant}, an explicit bound for $C_M$ can be obtained. 

\subsection{Finite-dimensional Hilbert spaces} 

First, we define partitions with rectangles. 
Let $\Delta$ be a finite set of indices and let $\Omega$ be a bounded domain in $\mathbb{R}^k$. 
A family $\Omega_{\Delta} := \{\Omega_{\alpha} \}_{\alpha \in \Delta}$ of subsets of $\Omega$ is a \emph{rectangular partition}  of $\Omega$ if $\Omega_{\Delta}$ satisfies 
        \begin{enumerate}
        \renewcommand{\labelenumi}{(\arabic{enumi})}
        \item $\mathcal{L}^{k}(\Omega \setminus \displaystyle \bigcup_{\alpha \in \Delta} \Omega_{\alpha}) = 0$; 
        \item $\mathcal{L}^{k}(\Omega_{\alpha} \cap \Omega_{\beta}) = 0$
        for $\alpha \not= \beta$,
        $(\alpha, \beta) \in \Delta \times \Delta$;
        here $\mathcal{L}^k$ denotes the Lebesgue measure.
        \item For each $\alpha \in \Delta$, 
        there exists a rectangular domain $R_{\alpha}$ in $\RR^k$ 
        such that $\Omega_{\alpha} = R_{\alpha} \cap \Omega$. 
        Here, by a rectangular domain, we mean that 
        \[
        R_\alpha = \left\{ x=(x_1,\ldots,x_k) \mid a_j<x_j<b_j,\ \text{for}\ j=1,\ldots,k \right\}
        \]
for some $a_j<b_j$.
    \end{enumerate}
Although our theory works for rather a general partition, we consider only the rectangular partition above for practical use.
    
We set 
\begin{equation}
    v(\Omega_{\Delta}) := \inf_{\alpha \in \Delta} \mathcal{L}^{k}(\Omega_{\alpha}).
    \label{eq:inf_sup_volume}
\end{equation}
We denote by $e(\Delta)$ the set of edges associated with $\Delta$ defined as
\begin{equation*}
   e(\Delta) := \{ \gamma := \{\alpha, \beta \} \subset \Delta \mid \mathcal{H}^{k-1}(\overline{\partial \Omega_{\alpha} \cap \partial\Omega_{\beta}}) \not= 0,\  \alpha \not= \beta \},
\end{equation*}
where $\mathcal{H}^m$ denotes the $m$-dimensional Hausdorff measure. 
For $\gamma := \{\alpha, \beta \}\in e(\Delta)$, we take a bijection $\mathrm{Sign_{\gamma}}: \gamma \to \{\pm 1 \}$ and set $E_{\gamma} := \overline{\partial \Omega_{\alpha} \cap \partial \Omega_{\beta}}$.
Moreover, we define the set $E\Omega_{\Delta}$ of interior edges in $\Omega$ associated with $\Omega_{\Delta}$ and the symbol  $\mathrm{Sign}_{e(\Delta)}$ on $e(\Delta)$ by 
\begin{equation*}
    E\Omega_{\Delta} := \{ E_{\gamma} \}_{\gamma \in  e(\Delta)}, \quad \mathrm{Sign}_{e(\Delta)} := \{ \mathrm{Sign_{\gamma}} \}_{\gamma \in e(\Delta)},
\end{equation*}
respectively.

Subsequently, assume that the partition $\Omega_{\Delta}$ of $\Omega$ is given.
Then, set 
\begin{equation*}
    H_{\Delta} := \left\{ U \in L^2(\Omega;\RR^{\ell}) \relmiddle|\mbox{ $\left.U\right|_{\Omega_{\alpha}}$ is constant in $\mathbb{R}^\ell$ for each $\Omega_{\alpha} \in \Omega_{\Delta}$}  \right\}.
\end{equation*}
We regard the space $H_{\Delta}$ as a closed subspace of the Hilbert space $L^2(\Omega;\RR^{\ell})$ endowed with the inner product defined as
\begin{equation*}
        \inner{X}{Y}{H_{\Delta}} := \langle X, Y \rangle_{L^2(\Omega; \RR^{\ell})}.  
\end{equation*}
For $U \in H_{\Delta}$, we denote the facet of $U$ by  
    \begin{equation*}
        \mathrm{Facet}(U) := \{ \{\alpha, \beta\}  \in e(\Delta) \mid U^{\alpha} = U^{\beta} \},\ \text{where}\ U^\alpha=\left.U\right|_{\Omega_{\alpha}}. 
    \end{equation*}
Further, we  set
\begin{equation*}
    M_{\Delta} := \left\{ u \in L^2(\Omega;M) \relmiddle|  \mbox{  $\left.u\right|_{\Omega_{\alpha}}$ is constant in $M$ for each  $\Omega_{\alpha} \in \Omega_{\Delta}$} \right\}.
\end{equation*}
We regard the non-convex space $M_{\Delta}$ as a submanifold of $H_{\Delta}$
in which the function takes values in  $M$.
Subsequently, for $u \in M_{\Delta}$, we denote by $T_u M_{\Delta}$ the tangent space of $M_{\Delta}$ at $u$, i.e., 
\begin{equation*}
    T_u M_{\Delta} := \left\{ X \in L^2(\Omega;\RR^{\ell}) \relmiddle| \mbox{$\left.X\right|_{\Omega_{\alpha}}$ is constant in  $T_{\left.u\right|_{\Omega_{\alpha}}}M$ for each $\Omega_{\alpha} \in \Omega_{\Delta}$} \right\}.
\end{equation*}
Moreover, we denote by $H_{E\Omega_{\Delta}}$ the space of piecewise constant $\RR^{\ell}$-valued maps on $\bigcup E\Omega_{\Delta}=\bigcup_{\gamma\in e(\Delta)} E_\gamma $ such that 
 \begin{equation*}
    H_{E\Omega_{\Delta}} := \left\{ U \in L^2(\bigcup E\Omega_{\Delta}; \RR^{\ell}) \mid  U|_{E_{\gamma}} \ \mbox{is constant in} \ \RR^{\ell} \mbox{ for each } E_{\gamma} \in E\Omega_{\Delta} \right\}.
\end{equation*}

\subsection{Discrete constrained TV flows } 
The problem $(\mathrm{TVF};u_0)$ is formally regarded as the gradient system of (isotropic) total variation:
\begin{equation*}
\begin{split}
    \btv(u) 
    &:= \int_{\Omega} |\bm{D}u| \\
    &:=  \sup_{\bm{\varphi} \in \mathcal{A}} \ \sum_{j = 1}^{\ell} \int_{\Omega} u^{j} (\nabla \cdot \varphi^{j}) \id x, \quad u := (u^1, \ldots, u^{\ell}) \in L^1(\Omega;\RR^{\ell}),
    \end{split}
\end{equation*}
where
\begin{equation*}
    \mathcal{A} 
    := \left\{ 
            \bm{\varphi} := (\varphi^1, \ldots, \varphi^{\ell}) \in C^{\infty}_0(\Omega; \RR^{k\times \ell}) \relmiddle| \| \bm{\varphi} \|_{\RR^{k \times \ell}} \leq 1
            \right\}.
\end{equation*}
The spatially discrete problems we consider in this paper are regarded  as the gradient system of \textit{discrete (isotropic) total variation}.
Let us begin with the definition of discrete total variation associated with $\Omega_{\Delta}$.
Let $\Omega_{\Delta} := \{ \Omega_{\alpha} \}_{\alpha \in \Delta}$ be a rectangle partition of $\Omega$.
Then the discrete total variation functional $\btv_{\Delta} \colon H_{\Delta} \to \RR$ associated with $\Omega_{\Delta}$ is defined as follows:
\begin{equation}\label{eq:discretized-TV}
    \btv_{\Delta}(u) 
    := \sum_{\gamma \in e(\Delta)} \| (\bfD_{\Delta} u)^{\gamma} \|_{\RR^{\ell}}\mathcal{H}^{k-1}(E_{\gamma}),
\end{equation}
where $\bfD_{\Delta} (:= \bfD_{\Delta}^{\mathrm{Sign}_{e(\Delta)}})  : H_{\Delta} \to H_{E\Omega_{\Delta}}$ is the discrete gradient associated with $\Omega_{\Delta}$ which is for $u:=\sum_{\alpha\in\Delta}u^\alpha\mathbf{1}_{\Omega_\alpha}$ defined by
\begin{equation*}
\begin{split} 
    \bfD_{\Delta} u
    &:= \sum_{\{\alpha, \beta \} \in e(\Delta) } (\bfD_{\Delta} u)^{\{\alpha, \beta \}}\mathbf{1}_{E_{\{\alpha, \beta \}}} \\
    &:= \sum_{\{\alpha, \beta \} \in e(\Delta) } ( \mathrm{Sign_{\{\alpha, \beta \}}}(\alpha)u^{\alpha} +  \mathrm{Sign_{\{\alpha, \beta \}}}(\beta) u^{\beta} )\mathbf{1}_{E_{\{\alpha, \beta \}}}.
\end{split}
\end{equation*}
This definition is easily deduced from the original definition of $\btv(u)$ when $u$ is a piecewise constant function associated with $\Omega_\Delta$.
We remark that the functional $\btv_{\Delta}$ is convex on $H_{\Delta}$ but not differentiable at a point $u$ whose facet $\mathrm{Facet}(u)$ is not empty.
The next proposition is an  immediate conclusion of the  definition of $\btv_{\Delta}$.
Hence, we state it without proof:

\begin{proposition}\label{property:dtv}
    The following statements hold:
    \begin{enumerate}
        \item $\btv_{\Delta}$ is a semi-norm in $H_{\Delta}$.
        \item $\btv_{\Delta}$ is Lipschitz continuous on $H_{\Delta}$, that is, 
        \begin{equation*}
            \mathrm{Lip}(\btv_{\Delta}) := \sup_{u, v \in H_{\Delta}} \frac{| \btv_{\Delta}(u) - \btv_{\Delta}(v) |}{\| u - v \|_{H_{\Delta}}} < \infty.
        \end{equation*}
        \item $\btv_{\Delta}(u)$ is equal to $\btv(u)$ for all $u \in H_{\Delta}$.
   \end{enumerate}
\end{proposition}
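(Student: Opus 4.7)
The plan is to handle the three claims separately; items (1) and (2) are routine consequences of the linearity of $\bfD_\Delta$ and the finite-dimensionality of $H_\Delta$, while item (3), the identification with the distributional total variation, is the substantive step.

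For (1), I would verify the three semi-norm axioms directly from the definition. Non-negativity is immediate since $\btv_\Delta(u)$ is a sum of non-negative terms. Absolute homogeneity follows because $\bfD_\Delta$ is linear and the Euclidean norm is absolutely homogeneous, so $(\bfD_\Delta(\lambda u))^\gamma = \lambda (\bfD_\Delta u)^\gamma$. The triangle inequality follows from the linearity of $\bfD_\Delta$ combined with the Euclidean triangle inequality applied edge-by-edge. For (2), $\bfD_\Delta$ is a linear operator between the finite-dimensional spaces $H_\Delta$ and $H_{E\Omega_\Delta}$, hence bounded. The reverse triangle inequality $|\btv_\Delta(u)-\btv_\Delta(v)|\leq \btv_\Delta(u-v)$ combines with a bound $\btv_\Delta(w)\leq C\|w\|_{H_\Delta}$, obtainable via Cauchy--Schwarz together with the positive lower bound $v(\Omega_\Delta)$ on cell volumes, to yield the required Lipschitz estimate.

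For (3), I would apply the divergence theorem cell-by-cell. Given $u=\sum_{\alpha\in\Delta} u^\alpha \mathbf{1}_{\Omega_\alpha}\in H_\Delta$ and a test field $\bm{\varphi}=(\varphi^1,\ldots,\varphi^\ell)\in\mathcal{A}$, Green's formula in each rectangular cell $\Omega_\alpha$ gives
\[
\sum_{j=1}^{\ell}\int_\Omega u^j\,(\nabla\cdot\varphi^j)\,\id x = \sum_{j=1}^{\ell}\sum_{\alpha\in\Delta} (u^\alpha)^j \int_{\partial\Omega_\alpha}\varphi^j\cdot\nu_\alpha\,\id S.
\]
Because $\bm{\varphi}$ is compactly supported in $\Omega$, the contributions on $\partial\Omega$ vanish, and the remaining boundary integrals pair up along interior edges: each $E_\gamma$ with $\gamma=\{\alpha,\beta\}$ contributes an integrand proportional to $(u^\alpha-u^\beta)^j$ times the normal component of $\varphi^j$. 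Using the sign convention to rewrite $u^\alpha-u^\beta$ in terms of $(\bfD_\Delta u)^\gamma$ and applying Cauchy--Schwarz in $\RR^\ell$ together with the constraint $\|\bm{\varphi}\|_{\RR^{k\times\ell}}\leq 1$ shows
\[
\sum_{j=1}^\ell\int_\Omega u^j(\nabla\cdot\varphi^j)\,\id x \leq \sum_{\gamma\in e(\Delta)} \|(\bfD_\Delta u)^\gamma\|_{\RR^\ell}\,\mathcal{H}^{k-1}(E_\gamma).
\]

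The main obstacle, and the only nontrivial step, is showing that this upper bound is actually attained by the supremum. The interior edges $E_\gamma$ are pairwise essentially disjoint $(k-1)$-rectifiable sets lying in the interior of $\Omega$, so for each $\gamma$ one can construct a localized test field $\bm{\varphi}_\gamma\in C_0^\infty(\Omega;\RR^{k\times\ell})$ supported in a thin tubular neighborhood of $E_\gamma$, with its unit normal component pointwise close to the unit vector $(\bfD_\Delta u)^\gamma/\|(\bfD_\Delta u)^\gamma\|_{\RR^\ell}$ (on edges where the discrete gradient is nonzero) and satisfying $\|\bm{\varphi}_\gamma\|_{\RR^{k\times\ell}}\leq 1$. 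For sufficiently thin neighborhoods these supports are disjoint, so $\sum_\gamma \bm{\varphi}_\gamma\in\mathcal{A}$; passing to the limit as the neighborhoods shrink recovers the asserted equality. This is the standard dual characterization of the $\mathrm{BV}$ semi-norm specialized to piecewise constant maps on a rectangular partition.
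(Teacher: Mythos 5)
The paper states this proposition without proof, calling it an immediate consequence of the definition of $\btv_{\Delta}$, so there is no argument of the authors' to compare against; your proposal is a correct and appropriately detailed expansion of what they take for granted. Items (1) and (2) are fine as written: the semi-norm axioms follow from the linearity of $\bfD_\Delta$, and the Lipschitz bound follows from the reverse triangle inequality together with the estimate $\|u^\alpha\|_{\RR^\ell}\le v(\Omega_\Delta)^{-1/2}\|u\|_{H_\Delta}$ and the finiteness of $e(\Delta)$ and of each $\mathcal{H}^{k-1}(E_\gamma)$. In item (3) the cell-by-cell integration by parts and the Cauchy--Schwarz upper bound are correct: the jump part of $\bm{D}u$ has rank-one density $(u^\alpha-u^\beta)\otimes\nu$ on $E_\gamma$, whose Frobenius norm is $\|u^\alpha-u^\beta\|_{\RR^\ell}$, which matches the summand in $\btv_\Delta$. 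The one imprecise sentence is the claim that for sufficiently thin tubular neighborhoods of the $E_\gamma$ the supports of the $\bm{\varphi}_\gamma$ become disjoint: two edges meeting along a common $(k-2)$-dimensional face have neighborhoods that always intersect near that face, no matter how thin. The standard repair is to support each $\bm{\varphi}_\gamma$ near a compact subset $K_\gamma$ of the relative interior of $E_\gamma$ with $\mathcal{H}^{k-1}(E_\gamma\setminus K_\gamma)<\varepsilon$; these compacta are pairwise at positive distance, so the localized fields can be summed within $\mathcal{A}$, and letting $\varepsilon\to0$ still attains the supremum. With that one-line fix your argument is complete.
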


Constrained discrete TV flow is  the constrained $L^2$-gradient flow of spatially discrete total variation.
The gradient $\nabla_{M_{\Delta}}\btv_{\Delta}$ of constrained discrete TV flow is, formally, given by
\begin{equation}
    \nabla_{M_{\Delta}}\btv_{\Delta}(u) := P_u \nabla \btv_{\Delta}(u), \quad u \in M_{\Delta}.
\end{equation}
Here, $\nabla \btv_{\Delta}$ denotes a formal gradient in $H_{\Delta}$ and $P_u$ denotes the orthogonal projection from $H_{\Delta}$ to $T_u M_{\Delta}$ naturally induced from $\pi_p$:
\begin{gather}
    P_u X(x) := \pi_{u(x)}(X(x)) \quad \mbox{for a.e. } x \in \Omega.
\end{gather}
The discrete total variation $\btv_{\Delta}$, however, is not differentiable.
Therefore, we use the subgradient $\partial \btv_{\Delta}$ in $H_{\Delta}$ instead of the gradient $\nabla \btv_{\Delta}$:
\begin{equation*}
    \partial \btv_{\Delta}(u) := \{ \zeta \in H_{\Delta} \mid \langle \zeta, v -u\rangle_{H_{\Delta}} + \btv_{\Delta}(u) \leq \btv_{\Delta}(v) \mbox{  for all  } v \in H_{\Delta} \}, \quad u \in H_{\Delta}.
\end{equation*}
Accordingly, the spatially discrete problem we consider is as follows:
\begin{definition} \label{Def1}
    Let $u_0 \in M_{\Delta}$ and $I := [0,T)$.
    A map $u \in W^{1,2}(I;M_{\Delta})$ is said to be \emph{a solution to the discrete Giga--Kobayashi model (GK model for short)  of  $(\mathrm{TVF};u_0)$} if $u$ satisfies
	\begin{equation}
		(\mathrm{DTVF}_{\mathrm{GK}}; u_0)
   		\left\{
		\begin{array}{ll}
			\displaystyle \frac{du}{dt}(t) \in -P_{u(t)}\partial \btv_{\Delta}(u(t)) &\mbox{for a.e. $t \in (0, T)$}, \\
			\displaystyle \left.u\right|_{t=0} = u_0. &
		\end{array}
		\right.
	\end{equation}
\end{definition}

Here, we note that the existence and the uniqueness of global-in-time solution of discrete GK model have been proved in \cite{Giga2003,Giga2005,Taguchi2018}:

\begin{proposition}[\cite{Giga2003,Giga2005,Taguchi2018}]
    Let $u_0 \in M_{\Delta}$, $I := [0,T)$ and $M$ be a $C^2$-compact submanifold in $\RR^{\ell}$.
    Then, there exits a solution $u \in W^{1,2}(0,T; M)$ to the discrete GK model of $(\textrm{TVF}; u_0)$.
    Moreover, assuming that $M$ is path-connected, $u$ is the unique solution.
\end{proposition}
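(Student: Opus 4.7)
The plan is to prove existence via a time-discrete minimizing movement scheme on the compact submanifold $M_\Delta \cong M^{\#\Delta}$ of $H_\Delta$, and to prove uniqueness by a Grönwall estimate whose crucial ingredient is the finiteness of the constant $C_M$ defined in \eqref{eq:Curv+LSF}, which requires both compactness and path-connectedness of $M$.

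For existence, I fix a step size $\tau>0$ and iteratively choose $u_\tau^{(n+1)} \in M_\Delta$ as a minimizer of $v \mapsto \btv_\Delta(v) + \frac{1}{2\tau}\|v - u_\tau^{(n)}\|_{H_\Delta}^2$; such a minimizer exists because $M_\Delta \cong M^{\#\Delta}$ is compact and $\btv_\Delta$ is continuous by Proposition~\ref{property:dtv}. The Lagrange condition at $u_\tau^{(n+1)}$ takes the form
\begin{equation*}
\frac{u_\tau^{(n+1)} - u_\tau^{(n)}}{\tau} \in -P_{u_\tau^{(n+1)}}\partial\btv_\Delta(u_\tau^{(n+1)}) + N_{M_\Delta}(u_\tau^{(n+1)}),
\end{equation*}
and the normal residual is controlled by $C_M\|u_\tau^{(n+1)}-u_\tau^{(n)}\|$ thanks to \eqref{eq:Curv+LSF}. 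Testing the defining minimization against $u_\tau^{(n)}$ gives the telescoping bound $\sum_n \|u_\tau^{(n+1)}-u_\tau^{(n)}\|^2 \le 2\tau\,\btv_\Delta(u_0)$, so the Rothe interpolation is equi-Hölder of exponent $\tfrac{1}{2}$ in time; an Ascoli--Arzel\`a argument then extracts a subsequence converging to some $u \in W^{1,2}(0,T;H_\Delta) \cap C([0,T];M_\Delta)$. Passing to the limit in the discrete inclusion, by lower semicontinuity of $\btv_\Delta$ and a Minty-type argument for the composed operator $P_{u(\cdot)}\partial\btv_\Delta$, identifies $u$ as a solution of $(\mathrm{DTVF}_\mathrm{GK};u_0)$.

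For uniqueness, assume $u_1, u_2 \in W^{1,2}(I;M_\Delta)$ are two solutions and choose measurable selections $\zeta_i(t)\in\partial\btv_\Delta(u_i(t))$ realizing $\dot{u}_i(t)=-P_{u_i(t)}\zeta_i(t)$. Differentiating $\tfrac{1}{2}\|u_1-u_2\|_{H_\Delta}^2$ and decomposing $P_{u_i}=\mathrm{Id}-(\mathrm{Id}-P_{u_i})$ gives
\begin{equation*}
\tfrac{1}{2}\tfrac{d}{dt}\|u_1-u_2\|^2 = -\langle u_1-u_2,\,\zeta_1-\zeta_2\rangle + \langle u_1-u_2,\,(\mathrm{Id}-P_{u_1})\zeta_1 - (\mathrm{Id}-P_{u_2})\zeta_2\rangle.
\end{equation*}
The first term is nonpositive by convexity of $\btv_\Delta$. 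For the second, note that $(\mathrm{Id}-\pi_{u_1(x)})\zeta_1(x) \in T^{\bot}_{u_1(x)}M$ pointwise, so Cauchy--Schwarz combined with the pointwise inequality $\|\pi^\bot_{u_1(x)}(u_1(x)-u_2(x))\| \le C_M \|u_1(x)-u_2(x)\|^2$ from \eqref{eq:Curv+LSF} (and the symmetric bound at $u_2$) and the uniform bound $\|\zeta_i\|_{H_\Delta} \le \Lip(\btv_\Delta)$ from Proposition~\ref{property:dtv} yields $\tfrac{d}{dt}\|u_1-u_2\|^2 \le C\|u_1-u_2\|^2$. Grönwall together with $u_1(0)=u_2(0)$ forces $u_1\equiv u_2$.

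The main obstacle is the non-convexity of $M_\Delta$ inside $H_\Delta$, which rules out the classical contractive-semigroup framework for gradient flows of convex functionals; both the existence passage-to-the-limit and the uniqueness energy estimate generate error terms living in the normal bundle of $M$. These are absorbed precisely by the curvature constant $C_M$, whose finiteness is equivalent to the quantitative ``second fundamental form'' control used in both steps, and which in turn rests on the compactness and path-connectedness assumptions on $M$. The approach of \cite{Giga2003,Giga2005,Taguchi2018} implements this programme in the particular settings they treat; for the present general $M$, the only structural ingredients needed are compactness, path-connectedness and the finiteness of $C_M$, so the argument carries through verbatim.
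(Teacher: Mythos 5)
First, a point of reference: the paper does not prove this proposition itself; it is quoted from \cite{Giga2003,Giga2005,Taguchi2018}. So your proposal can only be measured against the machinery the paper actually builds and against the cited strategy. On that score your uniqueness argument is essentially the paper's own mechanism: subtracting the two evolution inclusions, splitting $P_{u_i}=\mathrm{Id}-P^{\bot}_{u_i}$, killing the monotone part by convexity of $\btv_\Delta$, and absorbing the normal defect via the pointwise bound $\|\pi^{\bot}_{u_1(x)}(u_1(x)-u_2(x))\|\le C_M\|u_1(x)-u_2(x)\|^2$ together with $\|\zeta_i\|_{H_\Delta}\le\Lip(\btv_\Delta)$ is exactly the evolutionary variational inequality of Proposition~\ref{prop:main2} (with the same constant $C_0=2C_M\Lip(\btv_\Delta)v(\Omega_\Delta)^{-1/2}$, after passing from the pointwise estimate to $H_\Delta$ via the $L^4$--$L^2$ step used in the proof of Proposition~\ref{prop:main1}), followed by Gr\"onwall. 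That part is sound and matches both the paper and \cite{Taguchi2018}. The existence part via the unlocalized scheme $(\mathrm{MM};\tau,u_0)$, the telescoping bound $\sum_n\|u_\tau^{(n+1)}-u_\tau^{(n)}\|^2_{H_\Delta}\le 2\tau\,\btv_\Delta(u_0)$, and Ascoli--Arzel\`a is also the standard route and is consistent with \cite{Giga2005}.

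The one genuine soft spot is the sentence ``a Minty-type argument for the composed operator $P_{u(\cdot)}\partial\btv_\Delta$.'' Minty's device rests on maximal monotonicity, and the composed operator is precisely \emph{not} monotone here -- that is the whole difficulty the proposition is about, and it is why the paper's estimates all carry the semi-monotonicity defect $C_0\|u-v\|^2$. As written, this step would fail. It can be repaired in two standard ways, and you should commit to one: (i) since $H_\Delta$ is finite dimensional, $\partial\btv_\Delta$ is upper semicontinuous with compact convex values and $p\mapsto\pi_p$ is continuous, so $u\mapsto -P_u\partial\btv_\Delta(u)$ is an upper semicontinuous convex-valued multimap; uniform convergence of $u_\tau$ plus weak $L^2$ convergence of $\dot u_\tau$ and Mazur's lemma then identify the limit as a selection (the $O(\tau)$ normal residual of the difference quotients, controlled by $C_M$ as you note, vanishes in the limit); or (ii) pass to the limit in the \emph{discrete} evolutionary variational inequality and characterize the solution by the EVI of Proposition~\ref{prop:main2}, which is the route closest to \cite{Taguchi2018} and has the advantage of delivering uniqueness simultaneously. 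A second, minor, imprecision: finiteness of $C_M$ is not ``equivalent'' to the second-fundamental-form control, and for the existence step you only need the local quadratic normal estimate coming from $C^2$-compactness (no path-connectedness); path-connectedness enters only through the global bound on $C_M$ used in the uniqueness/Gr\"onwall step, which is exactly how the hypotheses are split in the statement.
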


\section{Numerical scheme}\label{sec:numerical_schemes} 

\subsection{Derivation}\label{sec:derivation} 
We seek a suitable time discretization of $(\mathrm{DTVF}_{\mathrm{GK}};u_0)$, so that we fix the step size $\tau > 0$ and denote by $N(I, \tau)$ the maximal number of iterations, that is, the minimal integer greater than $T/\tau$.
Moreover, we define the time nodal points  
\begin{align}
	I(\tau):=\{t^{(n)}\mid n=0,\ldots,N(I,\tau)\},
	\label{eq:time_nodal_point}
\end{align}
where
\begin{equation*}
    t^{(n)} 
    := \left\{
    \begin{array}{ll}
        n \tau &\mbox{if $n = 0, \dots N(I,\tau)-1$,} \\
        T & \mbox{if $n = N(I, {\tau})$.}
    \end{array}
    \right.
\end{equation*}
We focus on the gradient structure of $(\mathrm{DTVF}_{\mathrm{GK}};u_0)$ to use the minimizing movement scheme in \cite{Ambrosio2005}.
Then we obtain the following numerical scheme of $(\mathrm{DTVF}_{\mathrm{GK}};u_0)$:

\begin{algorithm}[$(\textrm{MM}; \tau, u_0)$: Minimizing Movement Scheme]
    Let $u_0 \in M_{\Delta}$ and $\tau>0$ be a step size.
    Then, we define the sequence $\{u_{\tau}^{(n)} \}_{n=0}^{N(I, \tau)}$ in $M_{\Delta}$ by the following scheme:
    \begin{enumerate}
        \item For $n=0$, $u^{(0)}_{\tau} := u_0$.
        \item For $n\geq 1$, $u_{\tau}^{(n)}$ is a minimizer of the optimization problem $(\mathrm{VP}; u_{\tau}^{(n-1)})$:
        \begin{equation*}
            \mbox{Minimize} \   \Phi^{\tau}(u;u_{\tau}^{(n-1)}) \  \mbox{subject to} \  u \in M_{\Delta},
        \end{equation*}
    \end{enumerate}
    where
   \begin{equation*}
        \Phi^{\tau}(u; u_{\tau}^{(n-1)})
        := \displaystyle \tau \btv_{\Delta}(u) + \frac{1}{2}\|u - u_{\tau}^{(n-1)} \|^2_{H_{\Delta}}, \quad u \in H_{\Delta}.
    \end{equation*}
\end{algorithm}

We determine $u^{(n)}_{\tau}$ by the optimization problem $(\mathrm{VP}; u_{\tau}^{(n-1)})$ when we use the minimizing movement scheme.
However, in general  it is not easy to solve the problem $(\mathrm{VP}; u_{\tau}^{(n-1)})$ since  each of  the problems  $(\mathrm{VP}; u_{\tau}^{(n-1)})$ is classified as \emph{non-smooth Riemanian constraint optimization problem}.
This difficulty motivates us to replace $(\mathrm{VP}; u_{\tau}^{(n-1)})$ with an optimization problem easier to handle.
Our strategy is to determine $u^{(n)}_{\tau}$ from the  tangent vector $X \in T_{u_\tau^{(n-1)}} M_\Delta$ which is the optimizer of \emph{non-smooth (convex) optimization problem $(\mathrm{VP}_{loc}; u^{(n-1)}_{\tau})$ with constraint in  the tangent space $T_{u_\tau^{(n-1)}}M_{\Delta}$} and the exponential map $\Exp_{u_{\tau}^{(n-1)}}: T_{u_{\tau}^{(n-1)}}M_\Delta \to M_{\Delta}$.

We explain this idea in more detail.
First, we rewrite the optimization problem $(\mathrm{VP};
u_\tau^{(n-1)})$ to obtain the one with a constraint into the tangent space $X \in T_{u_{\tau}^{(n-1)}}M_{\Delta}$.
Each $u \in M_{\Delta}$, thanks to the exponential map in $M$, can be rewritten as the pair of $u_{\tau}^{(n-1)}$ and $X \in T_{u_{\tau}^{(n-1)}}M_{\Delta}$ such that $u = \Exp_{u_{\tau}^{(n-1)}}(X)$, where $\Exp_{u_{\tau}^{(n-1)}} : T_{u_{\tau}^{(n-1)}}M_{\Delta} \to M_{\Delta}$ is defined as
\begin{equation*}
     X(x) \mapsto  \exp_{u_\tau^{(n-1)}(x)}(X(x)), \quad \mbox{for a.e. } x \in \Omega.
\end{equation*}
Here $\exp_x$ is the exponential map of the Riemannian manifold $M$.  
Since $\Exp_{u_{\tau}^{(n-1)}}(X) = u_{\tau}^{(n-1)} + X + o(X)$, we ignore the term $o(X)$ and insert $u = u_{\tau}^{(n-1)} + X$ into $\Phi^\tau(u, u_{\tau}^{(n-1)})$ in $(\mathrm{VP}; u^{(n-1)}_{\tau})$ to obtain
\begin{equation*}
	\Phi^\tau(u_{\tau}^{(n-1)} + X; u_{\tau}^{(n-1)}) = \tau \btv_{\Delta}(u_{\tau}^{(n-1)} + X) + \frac{1}{2}\norm{X}^2_{H_{\Delta}}.
\end{equation*}
Now, we define \textit{the localized energy} $\Phi^{\tau}_{loc}(\cdot ; u^{(n-1)}_{\tau}): H_{\Delta} \to \RR$ of $\Phi^{\tau}(\cdot ; u^{(n-1)}_{\tau})$ by the formula:
\begin{equation}
	\Phi_{loc}^{\tau}(X; u_{\tau}^{(n-1)}) 
	:= \tau \btv_{\Delta}(u_{\tau}^{(n-1)} + X) + \frac{1}{2}\norm{X}_{H_{\Delta}}^2, \quad X \in H_{\Delta}.
\end{equation}
Here, we emphasize that $\Phi_{loc}^{\tau}(\cdot ; u_{\tau}^{(n-1)})$ is convex in $H_{\Delta}$ since $\btv_{\Delta}$ is convex in $H_{\Delta}$.
Subsequently, we consider the optimization problem $(\mathrm{VP}_{loc}; u_{\tau}^{(n-1)})$:
\begin{equation*}
    \mbox{Minimize} \  \Phi^{\tau}_{loc}(X;u_{\tau}^{(n-1)}) \  \mbox{subject to} \ X \in T_{u_{\tau}^{(n-1)}}M_{\Delta}.
\end{equation*}
The problem $(\mathrm{VP}_{loc};u_{\tau}^{(n-1)})$ is convex because of convexity of $\Phi^{\tau}_{loc}$ in the Hilbert space $H_{\Delta}$ and linearity of the space $T_{u_{\tau}^{(n-1)}}M_{\Delta}$.
Hence, we can find a unique minimizer $X_{\tau}^{(n-1)}$ of $(\mathrm{VP}_{loc}; u_{\tau}^{(n-1)})$.
Finally, we associate $X_{\tau}^{(n-1)}$ with an element of $M_{\Delta}$ by the formula: $u_{\tau}^{(n)} := \Exp_{u_{\tau}^{(n-1)}}(X_{\tau}^{(n-1)})$.
Summarizing the above arguments, we have the following modified minimizing movement scheme $(\mathrm{MM}_{loc}; \tau, u_0)$:

\begin{algorithm}[$(\textrm{MM}_{loc}; \tau, u_0)$: Modified Minimizing Movement Scheme]
    Let $u_0 \in M_{\Delta}$ and $\tau>0$ be a step size.
    Then, we define the sequence $\{u_{\tau}^{(n)} \}_{n=0}^{N(I, \tau)}$ in $M_{\Delta}$ by the following procedure:
    \begin{enumerate}
        \item For $n=0$: $u_{\tau}^{(0)} := u_{0}$.
        \item For $n\geq1$: $u_{\tau}^{(n)}$ is defined by the following steps:
        \begin{enumerate}
            \item Find the minimizer $X_{\tau}^{(n-1)}$ of the variational problem $(\mathrm{VP}_{loc}; u_{\tau}^{(n-1)})$:
            \begin{equation*}
                \mbox{Minimize} \ \Phi_{loc}^{\tau}(X; u_{\tau}^{(n-1)}) \ \mbox{subject to} \ X \in T_{u_{\tau}^{(n-1)}}M_{\Delta},
            \end{equation*}
            where
            \begin{equation*}               \Phi_{loc}^{\tau}(X;u_{\tau}^{(n-1)}) := \tau \btv_{\Delta}(u_{\tau}^{(n-1)} + X) + \dfrac{1}{2}\norm{X}^2_{H_{\Delta}},
                \quad
                X \in T_{u_{\tau}^{(n-1)}} M_{\Delta}.
            \end{equation*}
            \item Set $u_{\tau}^{(n)} := \Exp_{u_{\tau}^{(n-1)}}(X_{\tau}^{(n-1)})$.
        \end{enumerate}
    \end{enumerate}
\end{algorithm}

\begin{remark}[Why the proposed scheme describes  facet-preserving phenomena] 
In the numerical calculation of (constrained) TV flows, we should pay attention to whether the scheme can adequately simulate the evolution of facets. 
To see why the scheme has the facet-preserving property, note that given $u \in M_{\Delta}$ and $X \in T_uM_{\Delta}$, the total variation of $u + X$ is decomposed into
    \begin{align*}
        \btv_{\Delta}(u + X) 
        &= \sum_{\gamma\in e(\Delta) \setminus \mathrm{Facet}(u)} \| (\bm{D}_{\Delta}(u + X))^{\gamma}  \|_{\RR^{\ell}}\mathcal{H}^{k-1}(E_{\gamma}) \\
        &\hspace{70pt}+ \sum_{\gamma \in \mathrm{Facet}(u)} \| (\bm{D}_{\Delta}X)^{\gamma} \|_{\RR^{\ell}}\mathcal{H}^{k-1}(E_{\gamma}).
    \end{align*}
Hence, the minimizer $X_* \in T_uM_{\Delta}$ of the optimization problem 
    \begin{equation*}
        \mbox{Minimize} \ \tau \btv_{\Delta}(u + X) + \frac{1}{2}\norm{X}^2_{H_{\Delta}} \ \mbox{subject to} \   X \in T_{u}M_{\Delta},
    \end{equation*}
tends to have the same facet as $u$, that is, $\mathrm{Facet}(u) = \mathrm{Facet}(X_*)$.
Therefore, $\Exp_u(X_*)$ also tends to have the same facet as $u$.
\end{remark}

This scheme is always well-defined since the minimizer is unique.
Here is the statement:
\begin{proposition}[Well-definedness of the proposed scheme]\label{proposition:scheme} 
	Let $\tau >0$ and $u_0 \in M_{\Delta}$.
    Then, the sequences $\{ u_{\tau}^{(n)} \}_{n =0}^{N(I, \tau)}$, $\{ X_{\tau}^{(n)} \}_{n = 0}^{N(I, \tau)-1}$ in the above modified minimizing movement  scheme are well-defined, and satisfy
	\begin{equation*}
		 X_{\tau}^{(n)} \in -\tau P_{u_{\tau}^{(n)}}\partial\btv_{\Delta}(u_{\tau}^{(n)} + X_{\tau}^{(n)}), \quad 
		 \| X_{\tau}^{(n)} \|_{H_{\Delta}} \leq \tau \ \Lip(\btv_{\Delta})
	\end{equation*}
	for all $n = 0, \ldots, N(I, \tau)-1$.
\end{proposition}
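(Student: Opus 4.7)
The plan is to proceed by induction on $n$. Fix $n$ with $0 \le n \le N(I,\tau)-1$ and assume that $u_\tau^{(n)} \in M_\Delta$ has already been constructed. It suffices to show that $(\mathrm{VP}_{loc}; u_\tau^{(n)})$ admits a unique minimizer $X_\tau^{(n)}$, from which the next iterate is defined by $u_\tau^{(n+1)} := \Exp_{u_\tau^{(n)}}(X_\tau^{(n)})$. Since $\Delta$ is finite, $H_\Delta$ is a finite-dimensional Hilbert space and $T_{u_\tau^{(n)}}M_\Delta$ is a (closed) linear subspace. The map $X \mapsto \Phi^{\tau}_{loc}(X;u_\tau^{(n)})$ is continuous by the Lipschitz continuity of $\btv_\Delta$ (Proposition \ref{property:dtv}), convex as the sum of a convex term and a quadratic, and strictly convex and coercive thanks to $\tfrac{1}{2}\|X\|_{H_\Delta}^2$. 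Its restriction to the closed subspace $T_{u_\tau^{(n)}}M_\Delta$ therefore attains a unique minimizer. The exponential step is well-defined because compactness of $M$ together with the Hopf--Rinow theorem (invoked in Section \ref{sec:preliminaries}) guarantees that $\exp_p$ is defined on all of $T_pM$ for every $p \in M$, and hence $u_\tau^{(n+1)} \in M_\Delta$.

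Next I would read off the subgradient inclusion from the first-order optimality condition on the subspace $T_{u_\tau^{(n)}}M_\Delta$. Using the subdifferential sum rule applied to the smooth quadratic term (whose gradient at $X_\tau^{(n)}$ is $X_\tau^{(n)}$) and to the continuous convex term $\tau\,\btv_\Delta(u_\tau^{(n)} + \cdot)$, the optimality condition reads
\begin{equation*}
    0 \in X_\tau^{(n)} + \tau\,\partial\btv_\Delta\!\left(u_\tau^{(n)} + X_\tau^{(n)}\right) + \left(T_{u_\tau^{(n)}}M_\Delta\right)^{\perp}.
\end{equation*}
Applying the orthogonal projection $P_{u_\tau^{(n)}}$ to this inclusion and noting $P_{u_\tau^{(n)}}X_\tau^{(n)} = X_\tau^{(n)}$ annihilates the normal-complement term and yields the required $X_\tau^{(n)} \in -\tau\,P_{u_\tau^{(n)}}\partial\btv_\Delta(u_\tau^{(n)} + X_\tau^{(n)})$.

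For the norm estimate, the inclusion furnishes a subgradient $\zeta \in \partial\btv_\Delta(u_\tau^{(n)} + X_\tau^{(n)})$ with $X_\tau^{(n)} = -\tau\,P_{u_\tau^{(n)}}\zeta$. Because $P_{u_\tau^{(n)}}$ is non-expansive, it is enough to bound $\|\zeta\|_{H_\Delta}$ by $\Lip(\btv_\Delta)$; this is the standard fact that any subgradient of a Lipschitz convex function has norm at most its Lipschitz constant, obtained by substituting $v = u + t\,\zeta/\|\zeta\|_{H_\Delta}$ into the subgradient inequality and letting $t \to 0^+$. The conclusion $\|X_\tau^{(n)}\|_{H_\Delta} \le \tau\,\Lip(\btv_\Delta)$ follows. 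I do not expect a serious obstacle: finite-dimensionality of $H_\Delta$ and linearity of $T_{u_\tau^{(n)}}M_\Delta$ make every step routine. The only point demanding genuine care is the correct formulation of the Euler--Lagrange condition on the linear subspace and its translation into the inclusion involving $P_{u_\tau^{(n)}}$, which is precisely what replaces the nonconvex Riemannian optimality condition that would appear in the unmodified scheme $(\mathrm{VP}; u_\tau^{(n-1)})$.
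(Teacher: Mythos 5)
Your proof is correct and follows the approach the paper itself takes: the authors justify existence and uniqueness of $X_{\tau}^{(n)}$ exactly as you do (convexity of $\Phi^{\tau}_{loc}$ plus linearity of $T_{u_{\tau}^{(n)}}M_{\Delta}$, with strict convexity and coercivity coming from the quadratic term) and leave the subgradient inclusion and the norm bound as routine consequences, which you supply via the standard Moreau--Rockafellar optimality condition $0 \in X_{\tau}^{(n)} + \tau\,\partial\btv_{\Delta}(u_{\tau}^{(n)}+X_{\tau}^{(n)}) + (T_{u_{\tau}^{(n)}}M_{\Delta})^{\perp}$, projection by $P_{u_{\tau}^{(n)}}$, and the fact that subgradients of a $\Lip(\btv_{\Delta})$-Lipschitz convex function are bounded by $\Lip(\btv_{\Delta})$. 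No gaps.
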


\subsection{Stability and convergence} 
\subsubsection{Rothe interpolation}\label{sec:rothe_interpolation} 
We consider the Rothe interpolation of sequences generated by $(\mathrm{MM}_{loc}; \tau, u_0)$.
This interpolation is useful to prove energy dissipation in Proposition \ref{thm:maindecay} and error estimate in Theorem~\ref{thm:mainerror}.
Set the time interpolation functions $\{ \ell_{\tau}^{(n)}\}_{n = 0}^{N(I, \tau)-1}, \ell_{\tau} \colon I \to [0, 1]$ as follows:
\begin{equation}
	\displaystyle \ell^{(n)}_{\tau}(t) := \frac{t-t^{(n)}}{\tau}1_{[t^{(n)},t^{(n+1)})}(t), \quad \ell_{\tau}(t) := \sum_{n = 0}^{N(I, \tau)-1}\ell^{(n)}_\tau(t),
	\quad
	t\in I.
\end{equation}
\begin{definition}[Rothe Interpolation] 
    Fix the initial datum $u_0 \in M_{\Delta}$.
    Let $\{ u_{\tau}^{(n)} \}_{n=0}^{N(I, \tau)}$ be the sequence generated by the modified minimizing movement scheme $(\mathrm{MM}_{loc};\tau, u_0)$.
    Then, we define two interpolations $\uu_{\tau},u_\tau \colon I \to M_{\Delta}$ as follows:
    \begin{align*}
        \uu_{\tau}(t) &:= \sum_{n = 0}^{N(I, \tau)-1}u_{\tau}^{(n)}1_{[t^{(n)}, t^{(n+1)})}(t), \\
        u_{\tau}(t) &:= \sum_{n = 0}^{N(I, \tau)-1}\left( \Exp_{u_{\tau}^{(n)}}( \ell_{\tau}^{(n)}(t) X_{\tau}^{(n)}) \right) 1_{[t^{(n)}, t^{(n+1)})}(t)
    \end{align*}
    for $t\in I$.
	In particular, $u_{\tau}$ is called the \textit{Rothe interpolation} of $\{ u_{\tau}^{(n)} \}_{n=0}^{N(I, \tau)}$.
\end{definition}
\begin{remark} 
	By definition, the Rothe interpolation $u_{\tau}$ is also represented as
	\begin{equation} \label{eq:3.3}
		u_{\tau} = \Exp_{\uu_{\tau}}(\ell_{\tau}\uX_{\tau}),
	\end{equation}
	where $\displaystyle \uX_{\tau} := \sum_{n = 0}^{N(I, \tau)-1}X_{\tau}^{(n)} 1_{[t^{(n)}, t^{(n+1)})}$. 
	Especially, $u_\tau$ is continuous in $I$.  
\end{remark}

\begin{proposition}[Properties of Rothe interpolation] \label{propertyofrothe} 
   Let $M$ be a path-connected and $C^2$-compact submanifold in $\RR^{\ell}$.
	Let $I := [0,T)$, $u_0 \in M_{\Delta}$ be an initial datum, $\tau >0$ be a step size, and  $\{ u_{\tau}^{(n)}\}_{n=0}^{N(I, \tau)}$ be the  sequence generated by the modified minimizing movement scheme $(\mathrm{MM}_{loc}; \tau, u_0)$. 
    Then, the Rothe interpolation $u_{\tau} : I \to M_{\Delta}$ of $\{ u_{\tau}^{(n)} \}_{n=0}^{N(I, \tau)}$ has the following properties:  
    \begin{enumerate}
        \item the curve $u_{\tau}$ is $C^{2}$-smooth in $I \setminus I(\tau)$, where $I(\tau)$ is defined in \eqref{eq:time_nodal_point},
        \item the velocity of $u_{\tau}$ is bounded by the  Lipschitz constant of $\btv_{\Delta}$, that is, 
        \begin{equation*}
            \left\| \frac{\id u_{\tau}}{\id t} \right\|_{H_{\Delta}} \leq \mathrm{Lip}(\btv_{\Delta}),
        \end{equation*}
        \item the acceleration of $u_{\tau}$ is bounded by the speed of $u_{\tau}$, that is,  
        \begin{equation*}
            \left\| \frac{\id^2 u_{\tau}}{\id t^2} \right\|_{H_{\Delta}} 
            \leq \tau^{-2} \cdot \mathrm{Curv}(M)\cdot \| \uX_{\tau} \|^2_{H_{\Delta}}.
        \end{equation*}
        Moreover, 
        \begin{equation*}
            \left\| \frac{\id^2 u_{\tau}}{\id t^2} \right\|_{H_{\Delta}} 
            \leq \mathrm{Curv}(M)\cdot \mathrm{Lip}(\btv_{\Delta})^2.
        \end{equation*}
    \end{enumerate}
\end{proposition}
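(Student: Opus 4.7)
My plan is to treat $u_\tau$ on each interval $[t^{(n)}, t^{(n+1)})$ as a reparametrized geodesic in $M_\Delta$, and to transfer the standard properties of geodesics in $M$ (smoothness of the exponential map, constant speed, and the geodesic equation~\eqref{eq:sff}) to $u_\tau$ via the affine time change $s = \ell_\tau^{(n)}(t) = (t - t^{(n)})/\tau$, using at the end the $H_\Delta$-norm bound on $X_\tau^{(n)}$ supplied by Proposition~\ref{proposition:scheme}.

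For (1), on each open subinterval $(t^{(n)}, t^{(n+1)})$ the curve $u_\tau(t)$ is, cell by cell, the composition of the affine map $\ell_\tau^{(n)}$ with the $C^2$ exponential map $s \mapsto \exp_{(u_\tau^{(n)})^\alpha}(s(X_\tau^{(n)})^\alpha)$ of the $C^2$-submanifold $M$; hence it is $C^2$ in $t$ on $I \setminus I(\tau)$. For (2), the constant-speed property of geodesics gives the pointwise identity $\bigl|\tfrac{\id}{\id s}\exp_p(sV)\bigr|_{\RR^\ell} = |V|_{\RR^\ell}$, so the chain rule yields
\begin{equation*}
	\frac{\id u_\tau}{\id t}(t,x) = \frac{1}{\tau}\,\frac{\id}{\id s}\Big|_{s=\ell_\tau^{(n)}(t)}\!\!\exp_{u_\tau^{(n)}(x)}\!\bigl(sX_\tau^{(n)}(x)\bigr)
\end{equation*}
with pointwise $\RR^\ell$-norm $\tau^{-1}|X_\tau^{(n)}(x)|$. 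Integrating the square over $\Omega$ gives $\|\id u_\tau/\id t\|_{H_\Delta} = \tau^{-1}\|X_\tau^{(n)}\|_{H_\Delta}$, and the estimate $\|X_\tau^{(n)}\|_{H_\Delta} \leq \tau\,\Lip(\btv_\Delta)$ from Proposition~\ref{proposition:scheme} closes the claim.

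For (3), I would differentiate once more in $t$ and substitute the geodesic equation~\eqref{eq:sff} cell by cell to produce
\begin{equation*}
	\frac{\id^2 u_\tau}{\id t^2}(t,x) = \frac{1}{\tau^2}\,\mathrm{II}_{u_\tau(t,x)}\!\bigl(V(t,x),V(t,x)\bigr),
\end{equation*}
where $V(t,x)$ is the current velocity of the cell-wise geodesic; by constant speed its $\RR^\ell$-norm equals $|X_\tau^{(n)}(x)|$. The definition of $\mathrm{Curv}(M)$ then gives the pointwise bound $|\id^2 u_\tau/\id t^2(t,x)|_{\RR^\ell} \leq \tau^{-2}\mathrm{Curv}(M)|X_\tau^{(n)}(x)|^2$. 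Squaring, integrating against $\id x$ over $\Omega$, and taking the square root produces the first displayed inequality; the second follows at once by inserting $\|\uX_\tau\|_{H_\Delta} \leq \tau\,\Lip(\btv_\Delta)$.

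The main obstacle I anticipate is the acceleration estimate, specifically the passage from the pointwise quadratic control $|X_\tau^{(n)}(x)|^2$ coming from $\mathrm{II}$ to the $H_\Delta$-norm $\|\uX_\tau\|^2_{H_\Delta}$ appearing on the right. Since both $u_\tau$ and $X_\tau^{(n)}$ are piecewise constant on the same finite rectangular partition $\Omega_\Delta$, all $L^p$-norms reduce to weighted finite sums over the cells; the required inequality should then be verifiable by a direct calculation on these finite sums, exploiting that the piecewise constant structure forces $\int_\Omega |X_\tau^{(n)}|^4\,\id x$ to be controlled by $\|X_\tau^{(n)}\|_{H_\Delta}^4$ up to partition constants. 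Aside from this book-keeping, the remainder is a routine chain-rule calculation on the reparametrized geodesic.
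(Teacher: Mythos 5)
Your proposal follows essentially the same route as the paper's proof: for (1) and (2) you argue exactly as the authors do (cell-wise exponential maps, the constant-speed identity $\|\id u_{\tau}/\id t\|_{H_{\Delta}}=\tau^{-1}\|X_{\tau}^{(n)}\|_{H_{\Delta}}$, and the bound $\|X_{\tau}^{(n)}\|_{H_{\Delta}}\leq\tau\,\Lip(\btv_{\Delta})$ from Proposition~\ref{proposition:scheme}), and for (3) both you and the paper substitute the geodesic equation \eqref{eq:sff} and invoke the definition of $\Curv(M)$.

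The obstacle you flag in (3) is, however, a genuine one, and your proposed resolution does not close it in the form stated. From the pointwise bound $|\id^{2}u_{\tau}/\id t^{2}(t,x)|\leq\tau^{-2}\Curv(M)|X_{\tau}^{(n)}(x)|^{2}$, squaring and integrating yields $\|\id^{2}u_{\tau}/\id t^{2}\|_{H_{\Delta}}\leq\tau^{-2}\Curv(M)\|X_{\tau}^{(n)}\|_{L^{4}}^{2}$, so one must replace $\|\cdot\|_{L^{4}}^{2}$ by $\|\cdot\|_{H_{\Delta}}^{2}$. The only comparison available for piecewise constant functions is the one the paper itself uses later in the proof of Proposition~\ref{prop:main1}, namely $\|f\|_{L^{4}}\leq v(\Omega_{\Delta})^{-1/4}\|f\|_{H_{\Delta}}$; this is precisely your ``direct calculation on finite sums'', but it delivers the acceleration estimate only with the extra factor $v(\Omega_{\Delta})^{-1/2}$, which is $\geq 1$ for fine partitions and cannot be absorbed into the stated constant. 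Indeed, a tangent field supported on a single cell of measure $\epsilon<1$ and attaining the supremum in the definition of $\Curv(M)$ gives $\|\mathrm{II}(V,V)\|_{H_{\Delta}}=\Curv(M)\sqrt{\epsilon}>\Curv(M)\epsilon=\Curv(M)\|V\|_{H_{\Delta}}^{2}$, so the constant-free inequality is false pointwise in general. To be fair, the paper's own proof asserts $\|\mathrm{II}(\id u_{\tau}/\id t,\id u_{\tau}/\id t)\|_{H_{\Delta}}\leq\Curv(M)\|\id u_{\tau}/\id t\|_{H_{\Delta}}^{2}$ in one line and thus shares exactly this gap; but as written, your plan proves (3) (and the ``Moreover'' bound) only with $\Curv(M)$ replaced by $\Curv(M)\,v(\Omega_{\Delta})^{-1/2}$, not with the constants appearing in the statement.
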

\begin{proof}
    (1): Since \eqref{eq:3.3} implies  $u_{\tau}|_{[t^{(n-1)}, t^{(n)})} = \exp_{u^{(n-1)}_{\tau}}(\ell^{(n-1)}_{\tau}X_{\tau}^{(n-1)})$, $u_{\tau}$ is $C^2$-smooth in $(t^{(n-1)}, t^{(n)})$.
    
    \noindent
    (2): Since the  speed of the geodesic is constant, we have
    \begin{equation*}
            \left\| \frac{\id u_{\tau}}{\id t}\right\|_{H_{\Delta}} = \tau^{-1}\left\| X^{(n-1)}_{\tau} \right\|_{H_{\Delta}} \leq \mathrm{Lip}(\btv_{\Delta})
    \end{equation*}
    in $[t^{(n-1)},t^{(n)})$.
    Here, in the last inequality, we used Proposition \ref{proposition:scheme}.
    
    \noindent
    (3): Since $u_{\tau}^{(n-1)}$ and $u_{\tau}^{(n)}$ are joined by the exponential map, $u_{\tau}|_{[t^{(n-1)}, t^{(n)})}$ satisfies the geodesic equation \eqref{eq:sff}, that is, 
    \begin{align*}
        &\frac{\id^2 u_{\tau}}{\id t^2}(t) - \mathrm{II}_{u_{\tau}(t)}\left( \frac{\id u_{\tau}}{\id t}(t),\frac{\id u_{\tau}}{\id t}(t) \right) = 0, \\
        &u_{\tau}(t^{(n-1)}) = u_{\tau}^{(n-1)},
        \quad \frac{\id u_{\tau}}{\id t}(t^{(n-1)}) = \tau^{-1} X_{\tau}^{(n-1)}.
\end{align*}
    Hence, we have 
    \begin{equation*}
        \left\| \frac{\id^2 u_{\tau}}{\id t^2}(t) \right\|_{H_{\Delta}} = \left\| \mathrm{II}_{u_{\tau}(t)}\left( \frac{\id u_{\tau}}{\id t}(t),\frac{\id u_{\tau}}{\id t}(t) \right) \right\|_{H_{\Delta}} 
        \leq \mathrm{Curv}(M) \ \left\| \frac{\id u_{\tau}}{\id t}(t) \right\|^2_{H_{\Delta}}.
    \end{equation*}
    Since the geodesic has constant speed,
    \begin{equation*}
        \left\| \frac{\id^2 u_{\tau}}{\id t^2}(t) \right\|_{H_{\Delta}} 
        \leq \mathrm{Curv}(M) \cdot \left\| \frac{\id u_{\tau}}{\id t}(t) \right\|^2_{H_{\Delta}} 
        = \tau^{-2} \ \mathrm{Curv}(M) \ \left\| X_{\tau}^{(n-1)} \right\|^2_{H_{\Delta}}.
    \end{equation*}
    Moreover, Proposition \ref{proposition:scheme} implies that 
    \begin{equation*}
        \left\| \frac{\id^2 u_{\tau}}{\id t^2}(t) \right\|_{H_{\Delta}} \leq  \mathrm{Curv}(M)\  \mathrm{Lip}(\btv_{\Delta})^2.
        \smartqed
    \end{equation*}
\end{proof}

\subsubsection{Discrete energy dissipation property} \label{sec:energy_decay} 
The total variation dissipates along corresponding constrained TV flows.
Hence, it is desirable that the proposed scheme also has this property.
Indeed, the proposed scheme has this property if the step size is small enough.

\begin{proposition}[Energy dissipation]\label{thm:maindecay} 
	Let $M$ be a $C^2$-compact manifold embedded into $\RR^{\ell}$, $I := [0,T)$, $u_0 \in M_{\Delta}$ be an initial datum, $\tau >0$ be a step size and  $\{ u_{\tau}^{(n)}\}_{n=0}^{N(I, \tau)}$ be a sequence generated by the modified minimizing movement scheme $(\mathrm{MM}_{loc}; \tau, u_0)$.
    If $\tau \cdot \mathrm{Curv}(M) \cdot \mathrm{Lip}(\btv_{\Delta}) \leq 1$, then $\btv_{\Delta}(u_{\tau}^{(n+1)}) \leq \btv_{\Delta}(u_{\tau}^{(n)})$ holds for all $n = 0, \ldots, N(I, \tau)-1$.
\end{proposition}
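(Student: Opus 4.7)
\noindent\textbf{Proof proposal for Proposition~\ref{thm:maindecay}.}

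The plan is to compare $\btv_{\Delta}(u_{\tau}^{(n+1)})$ with $\btv_{\Delta}(u_{\tau}^{(n)})$ by inserting the intermediate ``linearized'' iterate $u_{\tau}^{(n)} + X_{\tau}^{(n)}$, which lives in $H_{\Delta}$ but not generally in $M_{\Delta}$. The minimization defining $X_{\tau}^{(n)}$ will provide a \emph{strict} decrease at the linearized iterate, and the Lipschitz continuity of $\btv_{\Delta}$ together with the bound on the acceleration of the Rothe interpolation will control the error caused by exponentiating.

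First, I would exploit the optimality of $X_{\tau}^{(n)}$ in $(\mathrm{VP}_{loc};u_{\tau}^{(n)})$. Since $0 \in T_{u_{\tau}^{(n)}}M_{\Delta}$ is admissible, one has $\Phi_{loc}^{\tau}(X_{\tau}^{(n)}; u_{\tau}^{(n)}) \le \Phi_{loc}^{\tau}(0; u_{\tau}^{(n)}) = \tau\,\btv_{\Delta}(u_{\tau}^{(n)})$, which rearranges to
\begin{equation*}
    \btv_{\Delta}(u_{\tau}^{(n)} + X_{\tau}^{(n)}) \le \btv_{\Delta}(u_{\tau}^{(n)}) - \frac{1}{2\tau}\|X_{\tau}^{(n)}\|_{H_{\Delta}}^{2}.
\end{equation*}

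Next, I would estimate the gap between $u_{\tau}^{(n+1)} = \Exp_{u_{\tau}^{(n)}}(X_{\tau}^{(n)})$ and the linearized point $u_{\tau}^{(n)} + X_{\tau}^{(n)}$. On the interval $[t^{(n)},t^{(n+1)})$ the Rothe interpolation $u_{\tau}$ is a geodesic with $u_{\tau}(t^{(n)}) = u_{\tau}^{(n)}$ and $\frac{\id u_{\tau}}{\id t}(t^{(n)}) = \tau^{-1}X_{\tau}^{(n)}$, so Taylor's formula with integral remainder gives
\begin{equation*}
    u_{\tau}^{(n+1)} - u_{\tau}^{(n)} - X_{\tau}^{(n)} = \int_{t^{(n)}}^{t^{(n+1)}} (t^{(n+1)}-s)\, \frac{\id^{2} u_{\tau}}{\id t^{2}}(s)\,\id s.
\end{equation*}
Applying Proposition~\ref{propertyofrothe}(3) to bound the integrand in $H_{\Delta}$-norm by $\tau^{-2}\mathrm{Curv}(M)\|X_{\tau}^{(n)}\|_{H_{\Delta}}^{2}$ and integrating the weight $(t^{(n+1)}-s)$ over an interval of length $\tau$ yields
\begin{equation*}
    \|u_{\tau}^{(n+1)} - u_{\tau}^{(n)} - X_{\tau}^{(n)}\|_{H_{\Delta}} \le \frac{1}{2}\mathrm{Curv}(M)\,\|X_{\tau}^{(n)}\|_{H_{\Delta}}^{2}.
\end{equation*}
Using the Lipschitz continuity of $\btv_{\Delta}$ (Proposition~\ref{property:dtv}(2)) to transfer this estimate to total variation, I obtain
\begin{equation*}
    \btv_{\Delta}(u_{\tau}^{(n+1)}) \le \btv_{\Delta}(u_{\tau}^{(n)} + X_{\tau}^{(n)}) + \frac{1}{2}\mathrm{Lip}(\btv_{\Delta})\,\mathrm{Curv}(M)\,\|X_{\tau}^{(n)}\|_{H_{\Delta}}^{2}.
\end{equation*}

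Finally, chaining the two displayed bounds I would arrive at
\begin{equation*}
    \btv_{\Delta}(u_{\tau}^{(n+1)}) \le \btv_{\Delta}(u_{\tau}^{(n)}) + \frac{1}{2\tau}\bigl(\tau\,\mathrm{Lip}(\btv_{\Delta})\,\mathrm{Curv}(M) - 1\bigr)\|X_{\tau}^{(n)}\|_{H_{\Delta}}^{2},
\end{equation*}
so the hypothesis $\tau\cdot\mathrm{Curv}(M)\cdot\mathrm{Lip}(\btv_{\Delta}) \le 1$ exactly guarantees non-positivity of the coefficient and yields the desired inequality. The main subtlety is that the pointwise deviation $\|\exp_{p}(V) - p - V\|$ must be controlled in the \emph{$L^{2}$-based} norm of $H_{\Delta}$, for which one cannot simply integrate the naive pointwise estimate $\tfrac{1}{2}\mathrm{Curv}(M)|V|^{2}$; the clean $H_{\Delta}$-level control comes only through the second-derivative bound on the geodesic interpolation furnished by Proposition~\ref{propertyofrothe}(3), so the whole argument hinges on that estimate, which must be used rather than re-derived.
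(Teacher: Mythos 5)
Your proposal is correct and follows essentially the same route as the paper's proof: Taylor expansion of the exponential map with integral remainder, optimality of $X_{\tau}^{(n)}$ against the competitor $0$ to gain the $-\frac{1}{2\tau}\|X_{\tau}^{(n)}\|_{H_\Delta}^2$ term, and the Lipschitz continuity of $\btv_\Delta$ combined with the acceleration bound of Proposition~\ref{propertyofrothe} to control the remainder by $\frac12\mathrm{Curv}(M)\,\mathrm{Lip}(\btv_\Delta)\|X_{\tau}^{(n)}\|_{H_\Delta}^2$. Your closing remark about routing the $H_\Delta$-norm control through the geodesic second-derivative bound (rather than a naive pointwise estimate) is exactly how the paper handles it as well.
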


\begin{proof} 
    Fix $n \in \{0, \ldots, N(I,\tau)-1 \}$.
    Then expanding $u^{(n+1)}_{\tau} = \Exp_{u^{(n)}_{\tau}}(X^{(n)}_{\tau})$ in Taylor series implies that
    \begin{equation*}
    u^{(n+1)}_{\tau} = u^{(n)}_{\tau} + X^{(n)}_{\tau} + \int_{t^{(n)}}^{t^{(n+1)}}(t^{(n+1)}-s) \frac{\id^2 u_{\tau}}{\id t^2}(s) \id s.
    \end{equation*}
    The above formula and the triangle inequality imply
    \begin{equation*}
        \btv_{\Delta}(u^{(n+1)}_{\tau}) \leq \btv_{\Delta}(u^{(n)}_{\tau} + X^{(n)}_{\tau}) + \btv_{\Delta}\left( \int_{t^{(n)}}^{t^{(n+1)}}(t^{(n+1)}-s) \frac{\id^2 u_{\tau}}{\id t^2}(s) \id s \right).
    \end{equation*}
    Since $X^{(n)}_{\tau}$ is the minimizer of $(\mathrm{VP}_{loc}; u^{(n)}_{\tau})$, we have
    \begin{equation*}
    \begin{split}
        \btv_{\Delta}(u^{(n+1)}_{\tau}) 
        \leq \btv_{\Delta}&(u^{(n)}_{\tau}) 
        - \frac{1}{2\tau}\|X^{(n)}_{\tau} \|^2_{H_{\Delta}}
        + \btv_{\Delta}\left( \int_{t^{(n)}}^{t^{(n+1)}}(t^{(n+1)}-s) \frac{\id^2 u_{\tau}}{\id t^2}(s) \id s \right).
        \end{split}
    \end{equation*}
    By applying Lipschitz continuity of $\btv_{\Delta}$ and the Minkowski  inequality for integrals $\left\|\int f\right\|_{H_\Delta}\le\int\|f\|_{H_\Delta}$, we have
    \begin{equation*}
    \begin{split}
        \btv_{\Delta}(u^{(n+1)}_{\tau}) 
        \leq &\btv_{\Delta}(u^{(n)}_{\tau}) 
        - \frac{1}{2\tau}\|X^{(n)}_{\tau} \|^2_{H_{\Delta}}\\
        &\hspace{50pt}+ \mathrm{Lip}(\btv_{\Delta}) \ \int_{t^{(n)}}^{t^{(n+1)}} (t^{(n+1)}-s) \left\| \frac{\id^2 u_{\tau}}{\id t^2}(s)\right\|_{H_{\Delta}}ds.
    \end{split}
    \end{equation*}
    Proposition \ref{propertyofrothe} implies that 
   \begin{equation*}
       \btv_{\Delta}(u^{(n+1)}_{\tau}) 
       \leq \btv_{\Delta}(u^{(n)}_{\tau}) + \frac12
        \mathrm{Lip}(\btv_{\Delta}) \ \mathrm{Curv}(M) \
       \|X^{(n)}_{\tau} \|^2_{H_{\Delta}} - \frac{1}{2\tau}\|X^{(n)}_{\tau} \|^2_{H_{\Delta}}.
   \end{equation*}
  Since $\tau \cdot \mathrm{Curv}(M) \cdot \mathrm{Lip}(\btv_{\Delta}) \leq 1$, we have $\btv_{\Delta}(u_{\tau}^{(n+1)})  \leq  \btv_{\Delta}(u_{\tau}^{(n)})$.
\end{proof}

\subsubsection{Error estimate}\label{sec:convergence} 

Here, we establish an error estimate between the sequence generated by $(\mathrm{MM}_{loc}; \tau, u_0)$ and the solution to $(\mathrm{DTVF}_{\mathrm{GK}};u_0)$ when $u_0 ˆ\in M_{\Delta}$ is given.

Now  we state the error estimate of the numerical solution $\{ u^{(n)}_{\tau} \}_{n=0}^{N(I, \tau)} \subset M_{\Delta}$.

\begin{theorem}[Error estimate]\label{thm:mainerror} 
	Let $M$ be a path-connected and $C^2$-compact submanifold in $\RR^{\ell}$, $I := [0,T)$ and $\tau>0$.
    Fix two initial data $u_0^1, u_0^2 \in M_{\Delta}$.
    Let $u \in C(I;M_{\Delta})$ be a solution of the discrete GK model $(\mathrm{DTVF}_{\mathrm{GK}}; u_0^1)$ and $u_{\tau} \in C(I;M_{\Delta})$ be the Rothe interpolation of the series $\{ u_{\tau}^{(n)} \}_{n = 0}^{N(I, \tau)}$ given by $\left(MM_{loc};\tau,u^2_0\right)$. 
     Then, 
    \begin{equation}\label{eq:error}
        \norm{u_{\tau}(t) - u(t)}^2_{H_{\Delta}} \leq e^{C_0t}\norm{u^1_0 - u^2_0}_{H_{\Delta}}^2 + te^{C_0t}(C_1\tau+C_2\tau^2)
    \end{equation}
    for all $t \in I$, where
    \begin{align}
        C_0 &:= 2C_M\Lip(\btv_{\Delta})v(\Omega_{\Delta})^{-1/2}, \label{eq:C1}\\
        C_1 &:= \left(2+\mathrm{Diam}(M) \mathcal{L}^k (\Omega)^{1/2} \left(2C_Mv (\Omega_\Delta)^{-1} +\mathrm{Curv}(M) \right)\right) \Lip(\boldsymbol{TV}_\Delta)^2, \label{eq:C2}\\
        C_2 &:= \left( \frac32\mathrm{Curv}(M) + C_M v(\Omega_{\Delta})^{-1/2}\right) \ \mathrm{Lip}(\btv_{\Delta})^3. \label{eq:C3}
    \end{align}
    
    Here, we again note that the constant $C_M$ which appears in \eqref{eq:Curv+LSF} can be bounded by quantities only associated with the manifold $M$; that is, all the constants $C_0$, $C_1$ and $C_2$ are independent of time $t$ and step size $\tau$.
\end{theorem}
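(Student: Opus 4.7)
The plan is to adapt the evolutionary variational inequality (EVI) technique of Ambrosio–Gigli–Savaré to a setting where the energy $\btv_{\Delta}$ is convex on the ambient Hilbert space $H_{\Delta}$ but the constraint set $M_{\Delta}$ is only a submanifold. First, I would extract a measurable selection $\eta(t)\in\partial\btv_{\Delta}(u(t))$ of minimal norm such that $\frac{\id u}{\id t}(t)=-P_{u(t)}\eta(t)$, giving the decomposition $\eta(t)=-\frac{\id u}{\id t}(t)+\pi^{\bot}_{u(t)}\eta(t)$ with $\|\eta(t)\|_{H_\Delta}\le\Lip(\btv_\Delta)$. Using Proposition~\ref{proposition:scheme}, I write similarly $\xi_n=-\tau^{-1}X_\tau^{(n)}+\pi^{\bot}_{u_\tau^{(n)}}\xi_n$ where $\xi_n\in\partial\btv_{\Delta}(u_\tau^{(n)}+X_\tau^{(n)})$.

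Next, I would exploit the convexity of $\btv_{\Delta}$ on $H_\Delta$: adding the two subdifferential inequalities at the points $u(t)$ and $w_\tau(t):=u_\tau^{(n)}+X_\tau^{(n)}$ (on each interval $t\in[t^{(n)},t^{(n+1)})$) yields
\begin{equation*}
\bigl\langle \xi_n-\eta(t),\, w_\tau(t)-u(t)\bigr\rangle_{H_\Delta}\ge 0.
\end{equation*}
Substituting the two decompositions and rearranging gives
\begin{equation*}
\bigl\langle \tau^{-1}X_\tau^{(n)}-\tfrac{\id u}{\id t}(t),\, w_\tau(t)-u(t)\bigr\rangle_{H_\Delta}
\le
\bigl\langle \pi^{\bot}_{u_\tau^{(n)}}\xi_n-\pi^{\bot}_{u(t)}\eta(t),\, w_\tau(t)-u(t)\bigr\rangle_{H_\Delta}.
\end{equation*}
The right-hand side is the obstruction that would vanish were the constraint linear; it is controlled by transferring the projections via $\langle\pi^{\bot}_p v,q\rangle=\langle v,\pi^{\bot}_p q\rangle$, the pointwise estimate $\|\pi^{\bot}_p(p-q)\|\le C_M\|p-q\|^2$ from \eqref{eq:Curv+LSF}, the tangency $\pi^{\bot}_{u_\tau^{(n)}}X_\tau^{(n)}=0$, and the discrete $L^\infty$-vs-$L^2$ comparison $\|f\|_{L^\infty}\le v(\Omega_\Delta)^{-1/2}\|f\|_{L^2}$ for piecewise constant functions. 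This yields a bound of the form $C_M\,\Lip(\btv_\Delta)\,v(\Omega_\Delta)^{-1/2}\|u_\tau^{(n)}-u(t)\|^2_{H_\Delta}$ plus $O(\tau)$ remainders, which produces the constant $C_0$.

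The left-hand side must then be related to $\frac{1}{2}\frac{\id}{\id t}\|u_\tau(t)-u(t)\|^2_{H_\Delta}=\langle\frac{\id u_\tau}{\id t}(t)-\frac{\id u}{\id t}(t),u_\tau(t)-u(t)\rangle$. Since $u_\tau$ is the geodesic (not linear) interpolation, I use Taylor expansion of $\Exp$ combined with Proposition~\ref{propertyofrothe} to replace $\tau^{-1}X_\tau^{(n)}$ by $\frac{\id u_\tau}{\id t}(t)$ and $w_\tau(t)$ by $u_\tau(t)$, incurring errors of order $\tau\,\Curv(M)\,\Lip(\btv_\Delta)^2$ in the velocity and order $\tau\,\Lip(\btv_\Delta)$ in position. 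Bounding the cross terms using the global diameter bound $\|u_\tau(t)-u(t)\|_{H_\Delta}\le\Diam(M)\mathcal{L}^k(\Omega)^{1/2}$ (for the unconditionally large factor that multiplies $\tau$) and collecting lower-order $\tau^2$ contributions separately yields the differential inequality
\begin{equation*}
\tfrac{\id}{\id t}\|u_\tau(t)-u(t)\|^2_{H_\Delta}\le C_0\|u_\tau(t)-u(t)\|^2_{H_\Delta}+C_1\tau+C_2\tau^2
\end{equation*}
for a.e.\ $t$, with the constants $C_0,C_1,C_2$ exactly as in \eqref{eq:C1}–\eqref{eq:C3}. Gronwall's inequality with the initial discrepancy $\|u_\tau(0)-u(0)\|_{H_\Delta}=\|u^2_0-u^1_0\|_{H_\Delta}$ then delivers \eqref{eq:error}.

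The main obstacle is the normal-component term $\langle\pi^{\bot}_{u_\tau^{(n)}}\xi_n-\pi^{\bot}_{u(t)}\eta(t),w_\tau(t)-u(t)\rangle$: this is where the non-convexity of $M_\Delta$ enters, and it is only quadratically small in $\|u_\tau-u\|_{H_\Delta}$ precisely because of \eqref{eq:Curv+LSF}. Controlling it requires passing from pointwise quadratic bounds in $\mathbb{R}^\ell$ to an $L^2$ bound, which is where the factor $v(\Omega_\Delta)^{-1/2}$ appears and why $C_0$ degrades as the mesh is refined. A secondary bookkeeping difficulty is the careful accounting of the many $O(\tau)$ and $O(\tau^2)$ remainders produced when replacing $u_\tau^{(n)}$, $u_\tau^{(n)}+X_\tau^{(n)}$ and $\tau^{-1}X_\tau^{(n)}$ with their continuous-time counterparts $u_\tau(t)$ and $\frac{\id u_\tau}{\id t}(t)$; these produce the specific structure of $C_1$ and $C_2$.
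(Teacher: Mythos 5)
Your proposal is correct and follows essentially the same route as the paper: your monotonicity inequality $\langle\xi_n-\eta(t),w_\tau(t)-u(t)\rangle_{H_\Delta}\ge 0$ is precisely the sum of the two evolutionary variational inequalities the paper establishes (one for the Rothe interpolation with a general test point $v$, proved in Proposition~\ref{prop:main1}, and one for the exact flow, quoted from \cite{Taguchi2018} as Proposition~\ref{prop:main2}), and all the key ingredients coincide — splitting off the normal component via $P_{\uu_\tau}=I-P^{\bot}_{\uu_\tau}$, the quadratic bound \eqref{eq:Curv+LSF} combined with the piecewise-constant norm comparison to produce $C_Mv(\Omega_\Delta)^{-1/2}$, the geodesic Taylor remainders from Proposition~\ref{propertyofrothe}, and Gronwall. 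The only organizational difference is that you double variables directly rather than factoring through the two EVIs, which obliges you to also control the exact solution's normal term $\pi^{\bot}_{u(t)}\eta(t)$ yourself instead of citing it, but this is handled by the same estimates.
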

\begin{remark} 
    In this theorem, we cannot remove the exponentially growing  term $e^{C_0t}$ from the right hand side of~\eqref{eq:error}, because the functional $\btv_\Delta$ defined by~\eqref{eq:discretized-TV} is not in general convex but merely semi-convex. 
\end{remark}

We immediately see that if $u^1_0 = u^2_0$ in \eqref{eq:error}, then we have
\begin{equation*}
    \norm{u_{\tau}(t) - u(t)}^2_{H_{\Delta}} \leq te^{C_0t}(C_1\tau + C_2\tau^2),
\end{equation*}
and thus we conclude  that $u_\tau$ converges to $u$ in $C(I;M_{\Delta})$ as $\tau \to 0$.

The key estimates to prove Theorem~\ref{thm:mainerror} are the \textit{evolutionary  variational inequalities} for $u_{\tau}$ and $u$.
\begin{proposition}\label{prop:main1} 
    For each $v \in M_{\Delta}$,
    the Rothe interpolation $u_{\tau}: I \to M_{\Delta}$ of $\{ u_{\tau}^{(n)} \}_{n=0}^{N(I, \tau)}$ satisfies 
    \begin{equation}\label{eq:EVI-Rothe}
        \frac{1}{2}\frac{\id}{\id t} \| u_{\tau} - v \|^2_{H_{\Delta}} \leq \btv_{\Delta}(v)  - \btv_{\Delta}(u_{\tau})
        + \frac{C_0}{2} \| u_{\tau} - v \|^2_{H_{\Delta}} +  C_1\tau + C_2\tau^2
    \end{equation}
    for all $t \in I \setminus I(\tau)$, where $C_0, C_1, C_2$ are the same constants as in Theorem \ref{thm:mainerror}, and $I(\tau)$ is defined in \eqref{eq:time_nodal_point}.
\end{proposition}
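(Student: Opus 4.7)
The plan is to reduce to the classical minimizing-movement derivation of an evolutionary variational inequality, accounting for two geometric corrections: (i)~the Rothe interpolant $u_\tau$ is a (pointwise) geodesic on each sub-interval rather than a Euclidean segment, and (ii)~the optimality condition for $X_\tau^{(n)}$ lives in the tangent space $T_{u_\tau^{(n)}}M_\Delta$ rather than in all of $H_\Delta$. Fix $n$ and $t\in(t^{(n)},t^{(n+1)})$, so that $\uu_\tau(t)=u_\tau^{(n)}$ and $\uX_\tau(t)=X_\tau^{(n)}$. The starting identity
\begin{equation*}
\frac{1}{2}\frac{\id}{\id t}\|u_\tau-v\|_{H_\Delta}^2 = \left\langle\frac{\id u_\tau}{\id t}(t),u_\tau(t)-v\right\rangle_{H_\Delta}
\end{equation*}
is handled by first replacing $\id u_\tau/\id t$ by the discrete velocity $X_\tau^{(n)}/\tau$: integrating the second derivative bound in Proposition~\ref{propertyofrothe} gives $\|\id u_\tau/\id t-X_\tau^{(n)}/\tau\|_{H_\Delta}\le \tau\,\Curv(M)\Lip(\btv_\Delta)^2$, and the crude diameter bound $\|u_\tau-v\|_{H_\Delta}\le\Diam(M)\mathcal{L}^k(\Omega)^{1/2}$ absorbs the mismatch into a $C_1\tau$-type error.

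Next, Proposition~\ref{proposition:scheme} supplies $\zeta^{(n)}\in\partial\btv_\Delta(u_\tau^{(n)}+X_\tau^{(n)})$ with $X_\tau^{(n)}/\tau=-P_{u_\tau^{(n)}}\zeta^{(n)}$, and Lipschitz continuity of $\btv_\Delta$ gives $\|\zeta^{(n)}\|_{H_\Delta}\le\Lip(\btv_\Delta)$. Splitting the subgradient into tangent and normal parts,
\begin{equation*}
\left\langle\frac{X_\tau^{(n)}}{\tau},u_\tau-v\right\rangle = -\langle\zeta^{(n)},u_\tau-v\rangle + \langle(I-P_{u_\tau^{(n)}})\zeta^{(n)},u_\tau-v\rangle.
\end{equation*}
For the first (tangential) term, the subgradient inequality at $v\in M_\Delta\subset H_\Delta$ yields
\begin{equation*}
-\langle\zeta^{(n)},u_\tau-v\rangle \le \btv_\Delta(v)-\btv_\Delta(u_\tau^{(n)}+X_\tau^{(n)})+\langle\zeta^{(n)},u_\tau^{(n)}+X_\tau^{(n)}-u_\tau(t)\rangle,
\end{equation*}
and I replace $\btv_\Delta(u_\tau^{(n)}+X_\tau^{(n)})$ by $\btv_\Delta(u_\tau(t))$ using Lipschitz continuity together with the Taylor identity
\begin{equation*}
u_\tau^{(n)}+X_\tau^{(n)}-u_\tau(t)=(1-\ell_\tau(t))X_\tau^{(n)}-\int_{t^{(n)}}^{t^{(n+1)}}(t^{(n+1)}-s)\frac{\id^2 u_\tau}{\id t^2}(s)\id s,
\end{equation*}
whose two contributions are $O(\tau)$ and $O(\tau^2)$ by the acceleration bound of Proposition~\ref{propertyofrothe}; these feed into $C_1\tau$ and $C_2\tau^2$.

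The decisive ingredient, and the main obstacle, is the \emph{normal} term $\langle(I-P_{u_\tau^{(n)}})\zeta^{(n)},u_\tau-v\rangle$, which has no analogue in the flat minimizing-movement theory. Rewriting it as $\langle\zeta^{(n)},(I-P_{u_\tau^{(n)}})(u_\tau-v)\rangle$ by self-adjointness of $P_{u_\tau^{(n)}}$, I estimate it cellwise through the very definition of $C_M$ in~\eqref{eq:Curv+LSF}: whenever $p,q\in M$, $\|\pi_p^\perp(p-q)\|\le C_M\|p-q\|^2$. Splitting $u_\tau^\alpha-v^\alpha=(u_\tau^\alpha-u_\tau^{(n),\alpha})+(u_\tau^{(n),\alpha}-v^\alpha)$ and applying this bound to each $M$-pair produces factors quartic in cellwise differences. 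The critical trick is the discrete Sobolev-type embedding $\max_\alpha\|w^\alpha\|\le v(\Omega_\Delta)^{-1/2}\|w\|_{H_\Delta}$, which is immediate from~\eqref{eq:inf_sup_volume}; applied with $w=u_\tau^{(n)}-v$ it converts one of the quartic factors into $v(\Omega_\Delta)^{-1}\|u_\tau^{(n)}-v\|_{H_\Delta}^2$ while leaving the other summation bounded by $\|u_\tau^{(n)}-v\|_{H_\Delta}^2$. Combined with $\|\zeta^{(n)}\|_{H_\Delta}\le\Lip(\btv_\Delta)$, this yields the coefficient $C_0/2=C_M\Lip(\btv_\Delta)v(\Omega_\Delta)^{-1/2}$ in front of $\|u_\tau-v\|_{H_\Delta}^2$; the analogous treatment of $\|u_\tau^\alpha-u_\tau^{(n),\alpha}\|$ (which is itself $O(\tau)$ along the geodesic) only contributes $O(\tau^2)$ and feeds into $C_2\tau^2$. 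The reason this step is delicate is that a \emph{linear} bound $\|\cdot\|_{H_\Delta}$ in place of $\|\cdot\|_{H_\Delta}^2$ would be fatal for the Gr\"onwall step used to derive Theorem~\ref{thm:mainerror}; the quadratic bound survives precisely because $M\subset\RR^\ell$ is a submanifold (quantified by $C_M$) \emph{and} the partition has positive minimal cell volume (quantified by $v(\Omega_\Delta)^{-1/2}$). Collecting all contributions yields~\eqref{eq:EVI-Rothe} with constants matching~\eqref{eq:C1}--\eqref{eq:C3}.
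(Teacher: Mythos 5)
Your argument is correct and follows essentially the same route as the paper: Taylor-expand the velocity to isolate the discrete velocity $\uX_\tau/\tau$, invoke Proposition~\ref{proposition:scheme} to split the subgradient into a tangential part (handled by the subgradient inequality plus Lipschitz/Taylor corrections of order $\tau$ and $\tau^2$) and a normal part (handled by the $C_M$ bound together with the $v(\Omega_\Delta)$-embedding on piecewise constants, which is exactly how the quadratic term $\tfrac{C_0}{2}\|u_\tau-v\|^2$ arises in the paper). The only deviations are cosmetic — you use the $L^\infty$--$L^2$ embedding where the paper routes through $L^4$, and your Taylor remainder for $u_\tau^{(n)}+X_\tau^{(n)}-u_\tau(t)$ should read $\int_{t^{(n)}}^{t}(t-s)\frac{\id^2 u_\tau}{\id t^2}(s)\,\id s$ rather than having upper limit $t^{(n+1)}$ — neither of which affects the estimate.
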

\begin{proof}

    We will compute $\displaystyle \frac{1}{2}\frac{\id}{\id t} \norm{u_{\tau} - v}^2_{H_{\Delta}}$
    by splitting it into a semi-monotone term and an error term.
    Expanding $\id u_{\tau}/\id t$ in Taylor series implies that 
    \begin{equation*}
        \frac{\id u_{\tau}}{\id t} = \frac{1}{\tau}\uX_{\tau} + \int_{t^{(n)}}^t \frac{\id^2 u_{\tau}}{\id t^2}(s)\id s,
    \end{equation*}
    and inserting this into
    \begin{equation*}
        \frac{1}{2}\frac{\id}{\id t} \| u_{\tau} - v \|^2_{H_{\Delta}}
        = \inner{\frac{\id u_{\tau}}{\id t}}{u_{\tau} - v}{{H}_{\Delta}},
    \end{equation*}
    we have 
    \begin{equation}\label{eq:splitI&II}
        \begin{split}
            \frac{1}{2}\frac{\id}{\id t} \| u_{\tau} - v \|^2_{H_{\Delta}}
            &= \inner{\frac{1}{\tau}\uX_{\tau}}{u_{\tau} - v}{H_{\Delta}} + \int_{t^{(n)}}^t \inner{\frac{\id^2 u_{\tau}}{\id t^2}(s)}{u_{\tau} - v}{H_{\Delta}}\id s  \\
            &=: \bm{I} + \bm{II}. 
        \end{split}
    \end{equation}
    Moreover, we split the term $\bm{I}$ into a monotone term and a non-monotone term.
    Proposition \ref{proposition:scheme} implies that
    \begin{equation*}
        \bm{I} 
        = \inner{-P_{\uu_{\tau}} \partial \btv_{\Delta}(\uu_{\tau} + \uX_{\tau})}{u_{\tau} - v}{H_{\Delta}}.
    \end{equation*}
    Self-adjointness  of $P_{\uu_{\tau}}$ and the relation $P_{\uu_{\tau}} = I - P^{\bot}_{\uu_{\tau}}$ imply that
    \begin{equation}\label{eq:splitI_1&I_2}
        \begin{split}
            \bm{I} 
            &= \inner{\partial\btv_{\Delta}(\uu_{\tau} + \uX_{\tau})}{v - u_{\tau}}{H_{\Delta}} 
            + \inner{\partial \btv_{\Delta}(\uu_{\tau} + \uX_{\tau})}{ P^{\bot}_{\uu_{\tau}}(u_{\tau}-v) }{H_{\Delta}} \\
            &=: \bm{I}_1 + \bm{I}_2. 
        \end{split}
    \end{equation}\label{eq:I_1I_2II}
    We plug \eqref{eq:splitI_1&I_2} into the identity \eqref{eq:splitI&II}  to obtain
    \begin{equation}\label{eq:split}
        \frac{1}{2}\frac{\id}{\id t} \| u_{\tau} - v \|^2_{H_{\Delta}}
        = \bm{I}_1 + \bm{I}_2 + \bm{II}.
    \end{equation}
    \\
    We shall estimate $\bm{I}_1$, $\bm{I}_2$ and $\bm{II}$, respectively.
    First, we estimate the term $\bm{I}_1$:
    \begin{align*}
        \bm{I}_1 
        &= \left\langle \partial \btv_{\Delta}(\uu_{\tau} + \uX_{\tau}), (v - u_{\tau} + \uu_{\tau} + \uX_{\tau}) - (\uu_{\tau} + \uX_{\tau}) \right\rangle_{H_{\Delta}} \\
        &\leq \btv_{\Delta}(v - u_{\tau} + \uu_{\tau} + \uX_{\tau}) - \btv_{\Delta}(\uu_{\tau} + \uX_{\tau}).
    \end{align*}

    Since $u_{\tau} = \uu_{\tau} + \tau^{-1}(t - t^{(n)})\uX_{\tau} + \int_{t^{(n)}}^{t}(t-s)\frac{\id^2 u_{\tau}}{\id t^2}(s) \id s$, we have
    \begin{equation*}
    \begin{split}
        \bm{I}_1
        \leq \btv_{\Delta}&\left( v + \left(1 - \frac{t - t^{(n)}}{\tau}\right)\uX_{\tau} - \int_{t^{(n)}}^{t}(t-s)\frac{\id^2 u_{\tau}}{\id t^2}(s) \id s \right) \\
        &- \underline{\btv_{\Delta}\left( u_{\tau} + \left(1 - \frac{t - t^{(n)}}{\tau}\right)\uX_{\tau} -  \int_{t^{(n)}}^{t}(t-s)\frac{\id^2 u_{\tau}}{\id t^2}(s) \id s \right)}.
    \end{split}
    \end{equation*}
    Here, we apply  the triangle  inequality, $\btv_{\Delta}\left( x - y \right) \geq |\btv_{\Delta}\left( x \right) -\btv_{\Delta}\left( y \right) |$ for $x, y \in H_{\Delta}$,  
    to the underlined part in the above inequality to obtain
    \begin{equation*}
    \begin{split}
        \bm{I}_1 
        \leq  \btv_{\Delta}&\left( v + \left(1 - \frac{t - t^{(n)}}{\tau}\right)\uX_{\tau} - \int_{t^{(n)}}^{t}(t-s)\frac{\id^2 u_{\tau}}{\id t^2}(s) \id s \right) \\
        &-  \btv_{\Delta}( u_{\tau}) +  \btv_{\Delta}\left(-\left(1 - \frac{t - t^{(n)}}{\tau}\right)\uX_{\tau} +  \int_{t^{(n)}}^{t}(t-s)\frac{\id^2 u_{\tau}}{\id t^2}(s) \id s \right).
    \end{split}
    \end{equation*}
    The triangle inequality for $\btv_{\Delta}$ implies that
    \begin{equation*}
    \begin{split}
        \bm{I}_1
        \leq \btv_{\Delta}&(v) - \btv_{\Delta}(u_{\tau}) \\ &+ 2 \ \btv_{\Delta}\left(\left(1 - \frac{t - t^{(n)}}{\tau}\right)\uX_{\tau} \right) 
        + 2 \ \btv_{\Delta}\left( \int_{t^{(n)}}^{t}(t-s)\frac{\id^2 u_{\tau}}{\id t^2}(s) \id s \right).
    \end{split}
    \end{equation*}
    Proposition \ref{property:dtv} implies 
    \begin{equation*}
    \begin{split}
        \bm{I}_1
        &\leq \btv_{\Delta}(v) - \btv_{\Delta}(u_{\tau}) \\
        &\hspace{-10pt}+2\mathrm{Lip}(\btv_{\Delta}) \left\| \left(1 - \frac{t - t^{(n)}}{\tau}\right)\uX_{\tau} \right\|_{H_{\Delta}} 
       +2 \ \mathrm{Lip}(\btv_{\Delta})  \int_{t^{(n)}}^{t}(t-s)\left\|\frac{\id^2 u_{\tau}}{\id t^2}(s)\right\|_{H_{\Delta}} \id s.
    \end{split}
    \end{equation*}
    Proposition \ref{propertyofrothe} implies 
    \begin{equation}\label{ine:I_1} 
        \bm{I}_1
        \leq \btv_{\Delta}(v) - \btv_{\Delta}(u_{\tau}) + 2 \ \mathrm{Lip}(\btv_{\Delta})^2\tau + \mathrm{Curv}(M) \ \mathrm{Lip}(\btv_{\Delta})^3\tau^2.
    \end{equation}
    Next, we estimate the term $\bm{I}_2$.
    The Cauchy--Schwarz inequality and Proposition \ref{property:dtv} imply that
    \begin{equation}\label{ineq:II_0}
        \begin{split}
            \bm{I}_2 
            &\leq \mathrm{Lip}(\btv_{\Delta}) \cdot \norm{P^{\bot}_{\uu_{\tau}}(v - u_{\tau})}_{H_{\Delta}}. 
        \end{split}
    \end{equation}
    Here, we claim that
    \begin{equation}\label{ineq:claim01}
        \norm{P^{\bot}_{\uu_{\tau}}(v - u_{\tau})}_{H_{\Delta}} 
        \leq D_0\|  v - u_{\tau} \|_{H_{\Delta}}^2 + D_1\tau \left\|  v - u_{\tau} \right\|_{H_{\Delta}} + D_2\tau^2,
    \end{equation}
    where
    \begin{align*}
        D_0 &= C_M \ v(\Omega_{\Delta})^{-1/2}, \\
        D_1 &= 2 \ C_M \ v(\Omega_{\Delta})^{-1/2} \ \mathrm{Lip}(\btv_{\Delta}),  \\
        D_2 &= \left( \frac{\mathrm{Curv(M)}}{2} + C_M \ v(\Omega_{\Delta})^{-1} \right) \ \mathrm{Lip}(\btv_{\Delta})^2.
    \end{align*}
    Indeed, since  $u_{\tau} = \uu_{\tau} + \tau^{-1}(t - t^{(n)})\uX_{\tau} + \int_{t^{(n)}}^{t}(t-s)\frac{\id^2 u_{\tau}}{\id t^2}(s) \id s$, we have
    \begin{align*}
        P^{\bot}_{\uu_{\tau}}(v - u_{\tau}) 
        &= P^{\bot}_{\uu_{\tau}}( v - \uu_{\tau}) - P^{\bot}_{\uu_{\tau}}\left( \int_{t^{(n)}}^{t}(t-s)\frac{\id^2 u_{\tau}}{\id t^2}(s) \id s \right).
    \end{align*}
    Taking the norm $\| \cdot \|_{H_{\Delta}}$ in the above equation and applying the triangle inequality and the Minkowski  inequality for integrals yield  
    \begin{equation}\label{ineq:3}
        \| P^{\bot}_{\uu_{\tau}}(v - u_{\tau}) \|_{H_{\Delta}} 
        \leq \| P^{\bot}_{\uu_{\tau}}( v - \uu_{\tau}) \|_{H_{\Delta}}  + \int_{t^{(n)}}^{t}(t-s) \left\|  \frac{\id^2 u_{\tau}}{\id t^2}(s) \right\|_{H_{\Delta}} \id s.
    \end{equation}
    As the second term on the right-hand side is estimated in the same way as in \eqref{ine:I_1},  we focus on the term $\| P^{\bot}_{\uu_{\tau}}( v - \uu_{\tau}) \|_{H_{\Delta}}$.
    Since $C_M$ in \eqref{eq:Curv+LSF} is bounded, we see that 
\[
	\left| \pi^{\bot}_{\uu_c} (v-{\uu_c}) \right|^2
	\leq C^2_M \left| v-{\uu_\tau} \right|^4
\]
pointwise.
 Thus \eqref{eq:Curv+LSF} yields 
    \begin{equation}\label{ineq:3-1}
        \| P^{\bot}_{\uu_{\tau}}( v - \uu_{\tau}) \|_{H_{\Delta}}
        \leq C_M \| v - \uu_{\tau} \|_{L^4}^2.
    \end{equation}
Moreover, for sequences it is clear that
\[
	\sum|a_i|^4 \leq \left(\sum|a_i|^2 \right)^2.
\]
Thus,
\[
	\|f\|_{L^4} \leq \nu (\Omega_\Delta)^{-1/4 }\|f\|_{H_\Delta}
\]
for $f \in H_\Delta$, and we obtain that 
   \begin{equation}
        \| P^{\bot}_{\uu_{\tau}}( v - \uu_{\tau}) \|_{H_{\Delta}}
        \leq C_M v(\Omega_{\Delta})^{-1/2} \| v - \uu_{\tau} \|_{H_{\Delta}}^2,
    \end{equation}
    where $v(\Omega_{\Delta})$ is defined in \eqref{eq:inf_sup_volume}.
    We split $v - \uu_{\tau} = v - u_{\tau} + u_{\tau} - \uu_{\tau}$ to obtain
    \begin{equation*}
        \| P^{\bot}_{\uu_{\tau}}( v - \uu_{\tau}) \|_{H_{\Delta}} 
        \leq  C_M v(\Omega_{\Delta})^{-1/2} (\|  v - u_{\tau} \|_{H_{\Delta}}^2 + 2\left\langle  v - u_{\tau}, u_{\tau} - \uu_{\tau} \right\rangle_{H_{\Delta}} + \|  u_{\tau} - \uu_{\tau} \|_{H_{\Delta}}^2).
    \end{equation*}
    The Cauchy--Schwarz inequality implies that 
    \begin{equation*}
    \begin{split}
        \| P^{\bot}_{\uu_{\tau}}( v - \uu_{\tau}) \|_{H_{\Delta}}
        \leq  C_M v(\Omega_{\Delta})^{-1/2} (&\|  v - u_{\tau} \|_{H_{\Delta}}^2\\ 
        &+ 2\left\|  v - u_{\tau} \right\|_{H_{\Delta}}\left\| u_{\tau} - \uu_{\tau} \right\|_{H_{\Delta}} + \|  u_{\tau} - \uu_{\tau} \|_{H_{\Delta}}^2).
    \end{split}
    \end{equation*}
    Proposition \ref{propertyofrothe} implies 
    \begin{equation}
        \begin{split}
        \| P^{\bot}_{\uu_{\tau}}( v - \uu_{\tau}) \|_{H_{\Delta}} 
        \leq  C_M &v(\Omega_{\Delta})^{-1/2} (\|  v - u_{\tau} \|_{H_{\Delta}}^2\\ 
        &+ 2\tau \ \mathrm{Lip}(\btv_{\Delta}) \ \left\|  v - u_{\tau} \right\|_{H_{\Delta}} + \tau^2 \ \mathrm{Lip}(\btv_{\Delta})^2).
        \end{split}
        \label{eq:ineq}
    \end{equation}
    \if0
    Next, we focus on the term $\displaystyle \int_{t^{(n)}}^{t}(t-s) \left\|  \frac{\id^2 u_{\tau}}{\id t^2}(s) \right\|_{H_{\Delta}} \id s$.
    Proposition \ref{propertyofrothe} yields 
    \begin{equation}\label{ineq:3-2}
        \int_{t^{(n)}}^{t}(t-s) \left\|  \frac{\id^2 u_{\tau}}{\id t^2}(s) \right\|_{H_{\Delta}} \id s 
        \leq \frac{\mathrm{Curv}(M) \cdot \mathrm{Lip}(\btv_{\Delta})^2}{2} \tau^2.
    \end{equation}
    Plugging  \eqref{eq:ineq} into  \eqref{ineq:3-2} to obtain the claim \eqref{ineq:claim01}.
    We plug the inequalities \eqref{ineq:II_0} and \eqref{ineq:claim01} to obtain that
    \fi
    
    Plugging \eqref{eq:ineq} into \eqref{ineq:3} yields the claim \eqref{ineq:claim01}, and taking into account \eqref{ineq:II_0}, we obtain
    
    \begin{equation}\label{ine:I_2}
        \bm{I}_2 
        \leq \mathrm{Lip}(\btv_{\Delta}) (D_0\|  v - u_{\tau} \|_{H_{\Delta}}^2 + D_1\tau \left\|  v - u_{\tau} \right\|_{H_{\Delta}} + D_2\tau^2).
    \end{equation}
    Next, we estimate the term $\bm{II}$.
    The Cauchy--Schwarz inequality and the Minkowski  inequality for integrals imply 
    \begin{equation*}
        \bm{II} \leq \int_{t^{(n)}}^t \norm{\frac{\id^2 u_{\tau}}{\id t^2}(s)}_{H_{\Delta}}\id s \norm{u_{\tau} - v}_{H_{\Delta}}.
    \end{equation*}
    Proposition \ref{propertyofrothe} yields 
    \begin{equation}\label{ine:II}
        \bm{II} \leq \tau \ \mathrm{Curv}(M)\ \mathrm{Lip}(\btv_{\Delta})^2 \ \norm{u_{\tau} - v}_{H_{\Delta}}.
    \end{equation}
The term $\tau\|u_\tau-v\|_{H_\Delta}$ is estimated by $\tau\,\Diam(M)\mathcal{L}^k(\Omega)^{1/2}$ since $M$ is compact.

    Finally, we combine inequalities \eqref{eq:split}, \eqref{ine:I_1}, \eqref{ine:I_2} and \eqref{ine:II} to obtain
    \begin{align*}
        \frac{1}{2}\frac{\id}{\id t} \| u_{\tau} - v \|^2_{H_{\Delta}} 
        &\leq \btv_{\Delta}(v ) - \btv_{\Delta}(u_{\tau}) + \frac{C_0}2\norm{u_{\tau} - v}^2_{H_{\Delta}} +  C_1\tau + C_2\tau^2,
    \end{align*}
     where $C_0$, $C_1$ and $C_2$ are given in \eqref{eq:C1}, \eqref{eq:C2} and \eqref{eq:C3}, respectively, which completes the proof.
\end{proof}

The solution $u$ of $(\mathrm{DTVF}_{GK}; u_0)$ satisfies the following evolutionary  variational inequality which is obtained by an  argument similar to that in the proof of Proposition \ref{prop:main1}; for the proof, see \cite{Taguchi2018}. 
\begin{proposition}\label{prop:main2} 
    For each $v \in M_{\Delta}$, the solution $u \in W^{1,2}(I;M_{\Delta})$ of discrete GK model  $(\mathrm{DTVF}_{\mathrm{GK}};u_0)$ satisfies 
    \begin{equation}\label{eq:EVI-DTVF}
        \frac{1}{2}\frac{\id}{\id t}\norm{u - v}^2_{H_{\Delta}} \leq \btv_{\Delta}(v) - \btv_{\Delta}(u) + \frac{C_0}{2}\norm{u - v}^2_{H_{\Delta}}
    \end{equation}
    for a.e. $t \in (0,T)$, where $C_0$ is the same constant as  in Proposition~\ref{prop:main1}.
\end{proposition}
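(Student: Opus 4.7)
The plan is to mimic the proof of Proposition~\ref{prop:main1} but in a simpler form, since the solution $u$ of $(\mathrm{DTVF}_{\mathrm{GK}};u_0)$ has no time-discretization error. First I would use absolute continuity of $t\mapsto \tfrac12\|u(t)-v\|_{H_\Delta}^2$ (valid since $u \in W^{1,2}(I;M_\Delta)$) to write
\begin{equation*}
    \frac12\frac{\id}{\id t}\|u-v\|_{H_\Delta}^2 = \inner{\tfrac{\id u}{\id t}}{u-v}{H_\Delta}
\end{equation*}
for a.e.\ $t\in(0,T)$. By the defining differential inclusion, at such $t$ there exists $\zeta(t)\in\partial\btv_\Delta(u(t))$ with $\tfrac{\id u}{\id t}=-P_{u}\zeta$. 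Since $\btv_\Delta$ is Lipschitz (Proposition~\ref{property:dtv}), the subgradient element $\zeta$ satisfies $\|\zeta\|_{H_\Delta}\le \Lip(\btv_\Delta)$, a fact I will use in the error term below.

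Next I would exploit the self-adjointness of the orthogonal projection $P_u$ and the decomposition $P_u = I-P_u^\bot$ to split
\begin{equation*}
    \inner{-P_u\zeta}{u-v}{H_\Delta} = \inner{\zeta}{v-u}{H_\Delta} + \inner{\zeta}{P_u^\bot(u-v)}{H_\Delta}.
\end{equation*}
The first term is handled by the subgradient inequality: $\inner{\zeta}{v-u}{H_\Delta}\le \btv_\Delta(v)-\btv_\Delta(u)$. This is exactly the monotone part that gives the desired right-hand side.

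The remaining \emph{non-monotone} term $\inner{\zeta}{P_u^\bot(u-v)}{H_\Delta}$ is the only place where geometry of $M$ enters. By the Cauchy--Schwarz inequality and $\|\zeta\|_{H_\Delta}\le\Lip(\btv_\Delta)$, I bound it by $\Lip(\btv_\Delta)\cdot\|P_u^\bot(u-v)\|_{H_\Delta}$. Then I apply the pointwise estimate from the defining inequality of $C_M$ in \eqref{eq:Curv+LSF}, namely $|\pi_{u(x)}^\bot(u(x)-v(x))|\le C_M|u(x)-v(x)|^2$, to obtain
\begin{equation*}
    \|P_u^\bot(u-v)\|_{H_\Delta} \le C_M\|u-v\|_{L^4(\Omega;\RR^\ell)}^2
    \le C_M\,v(\Omega_\Delta)^{-1/2}\|u-v\|_{H_\Delta}^2,
\end{equation*}
where the last step uses the inverse embedding $\|\cdot\|_{L^4}\le v(\Omega_\Delta)^{-1/4}\|\cdot\|_{H_\Delta}$ on piecewise-constant functions, exactly as in the proof of Proposition~\ref{prop:main1}. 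Combining, the non-monotone term is dominated by $\tfrac{C_0}{2}\|u-v\|_{H_\Delta}^2$ with the stated $C_0=2C_M\Lip(\btv_\Delta)v(\Omega_\Delta)^{-1/2}$, which finishes the proof.

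The main obstacle is not an analytical one but a measurability/selection issue: one must ensure that a subgradient selection $\zeta(t)\in\partial\btv_\Delta(u(t))$ realizing $\tfrac{\id u}{\id t}=-P_{u(t)}\zeta(t)$ exists a.e., so that the pointwise-in-$t$ estimate can be integrated. This follows from the standard theory of maximal monotone perturbations underlying $(\mathrm{DTVF}_{\mathrm{GK}};u_0)$ used in \cite{Taguchi2018}; once this is granted, the estimate is a direct and simpler version of the computations already carried out for $u_\tau$, with the $C_1\tau+C_2\tau^2$ remainders dropping out because no Taylor expansion of the exponential map is required.
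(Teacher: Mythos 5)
Your proposal is correct and follows exactly the route the paper intends: the paper itself omits the proof, stating only that \eqref{eq:EVI-DTVF} follows by ``an argument similar to that in the proof of Proposition~\ref{prop:main1}'' and citing \cite{Taguchi2018}, and your argument is precisely that simplification --- the same splitting via $P_u = I - P_u^\bot$, the subgradient inequality for the monotone part, and the $C_M$ plus $L^4$-embedding bound for the non-monotone part, with the Taylor-remainder terms absent. The constants match ($C_0 = 2C_M\Lip(\btv_\Delta)v(\Omega_\Delta)^{-1/2}$), and your remark on the measurable selection of $\zeta(t)$ is the right point to flag as the only technical prerequisite.
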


Now, we will finish the proof of Theorem~\ref{thm:mainerror}.

\noindent
\textbf{Proof of Theorem~\ref{thm:mainerror}:}
    Fix $t \in (0,T)$.
    By substituting $v = u_{\tau}(t)$ into \eqref{eq:EVI-DTVF} 
    and $v = u(t)$ into \eqref{eq:EVI-Rothe}
    and adding these two inequalities,
    we obtain
    \begin{equation*}
        \frac{\id}{\id t} \norm{u(t) - u_{\tau}(t)}^2_{H_{\Delta}} 
        \leq C_0\norm{u(t) - u_{\tau}(t)}^2_{H_{\Delta}} + C_1 \tau + C_2\tau^2,
    \end{equation*}
    where $C_0$, $C_1$ and $C_2$ are given in \eqref{eq:C1}, \eqref{eq:C2} and \eqref{eq:C3}, respectively.
    By the Gronwall's inequality, we have
    \begin{equation*}
        \norm{ u_{\tau}(t) - u(t) }_{H_{\Delta}}^2 \leq e^{C_0t}(\norm{ u_{\tau}(0) - u(0) }^2_{H_{\Delta}} + t(C_1\tau + C_2\tau^2))
    \end{equation*}
    for all $t \in [0,T)$, which proves Theorem~\ref{thm:mainerror}.
\qed

\begin{remark}[Convergence of spatially discrete TV flow as the mesh tends to zero]
This problem is difficult and is studied only for unconstrained problems.
We consider the case for the TV flow of scalar functions.
 If one uses a rectangular partition of $\Omega$, then the solution of discrete model solves an anisotropic $\ell_1$-total variation flow, which is the gradient flow of
\[
	TV_{\ell_1}(u) := \int_\Omega |\nabla u|_{\ell_1} dx,
\]
where $|p|_{\ell_1}:=\sum^k_{i=1}|p_i|$, $p=(p_1, \ldots, p_k)$.
 This is known for the one-dimensional case for a long time ago \cite{GGK} and for higher dimensional case by \cite{LMM}.
 Thus, a discrete solution $u_h$ converges to a solution $u$ of
\[
	\frac{\partial u}{\partial t} = \sum^k_{j=1} \frac{\partial}{\partial x_j}
	\frac{\partial u/\partial x_j}{|\partial u/\partial x_j|}
\]
in $L^\infty \left(0,T,L^2(\Omega)\right)$ if initially $\left. u_h \right|_{t=0}=u_{0h}$ converges to the continuum initial data $u_0$ in $L^2(\Omega)$.
 More precisely,
\[
	\left\| u_h - u \right\|_{L^2}(t) \leq \left\| u_{0h} - u_0 \right\|_{L^2}
\]
holds since the solution semigroup is a contraction semigroup.
For constrained cases, it is expected, but so far, there is no literature stating this fact.
\end{remark}

\section{Practical algorithms}\label{sec:numerical} 

In this section, we present the numerical algorithms obtained  by the proposed scheme.

\subsection{The proposed scheme with alternating split Bregman iteration} 
In order to implement  the proposed scheme, we need to solve a minimization problem in each iteration.
We simplify this optimization problem by applying  alternating split Bregman iterations.
We replace the minimization problem $(\mathrm{VP_{loc}}; u_{\tau}^{(n-1)})$ with alternating split Bregman iteration which is proposed in~\cite{Goldstein2009} to solve the $L^1$ regularization problem efficiently.

First we apply a splitting method to $(\mathrm{VP}_{loc}; u_{\tau}^{(n-1)})$, and we obtain the split formulation $(\mathrm{VP}_{loc, split}; u_{\tau}^{(n-1)})$:
\begin{equation*}
    \min_{X \in H_0, Z \in H_1} \Phi_{loc, split}^{\tau}(X,Z; Y^{(n-1)}) \ \mbox{subject to} \ Z = \mathcal{W}(X) + Y^{(n-1)},
\end{equation*}
where $\Phi_{loc, split}^{\tau}(\cdot, \cdot ; Y^{(n-1)}): H_0 \times H_1 \to \RR\cup\{ \infty \}$ is defined as
\begin{equation*}
    \begin{aligned}
        \Phi_{loc, split}^{\tau}(X, Z; Y^{(n-1)}) &:= \tau \sum_{\gamma \in e(\Delta)} \|Z_0^{\gamma} \|_{\RR^{\ell}} \mathcal{H}^{k-1}(E_{\gamma})
        + \tau I_{T_{u_{\tau}^{(n-1)}}M_{\Delta}}(Z_1) + \frac{1}{2}\norm{X}_{H_{\Delta}}^2,\\
        I_{T_{u_{\tau}^{(n-1)}}M_{\Delta}}(X) &:= %
        \left\{
            \begin{array}{cc}
                0, &\mbox{if $X \in T_{u_{\tau}^{(n-1)}}M_{\Delta}$,} \\
                \infty, & \mbox{otherwise},
            \end{array}
        \right.
        \quad
        \mathcal{W}(X) := \pmt{\bfD_{\Delta} X \\ X}, \\
        Y^{(n-1)} &:= \pmt{\bfD_{\Delta} u_{\tau}^{(n-1)} \\ 0},
        \quad
        Z = \pmt{Z_0 \\ Z_1},
    \end{aligned}
\end{equation*}
in which $H_0 := H_{\Delta}$ and $H_1 := H_{E\Omega_{\Delta}} \times H_{\Delta}$.
Subsequently, we apply Bregman iteration with alternating minimization method to $(\mathrm{VP}_{loc, split};u_{\tau}^{(n-1)})$, to arrive at the following algorithm: 

\begin{algorithm} 
[$(\mathrm{VP}_{loc, split};u_{\tau}^{(n-1)})$: Alternating Split Bregman Iteration for $(\mathrm{VP}_{loc};u_{\tau}^{(n-1)})$]
    Set $\displaystyle X_{\tau}^{(n-1)} := \lim_{k \to \infty} X^{(k)}$, where the sequence $\{ X^{(k)} \}_{k=0}^{\infty}$ is defined by the following procedure:
    \begin{enumerate}
        \item For $k=0$: Set $\rho>0$, $Z^{(0)} \in H_1$ and $B^{(0)} \in H_1$.
        \item For $k \geq 1$: 
            \begin{enumerate} 
                \item   $X^{(k)} := \argmin_{X \in H_0}  \Phi_{loc, SBI}^{\tau}\left(X, Z^{(k-1)}, B^{(k-1)}; Y^{(n-1)}\right)$,
                \item   $Z^{(k)} := \argmin_{Z \in H_1} \Phi_{loc, SBI}^{\tau}\left(X^{(k)} ,Z, B^{(k-1)}; Y^{(n-1)}\right)$,
                \item   $B^{(k)} := B^{(k-1)} +  \mathcal{W}(X^{(k)}) + Y^{(n-1)} -  Z^{(k)}$.
            \end{enumerate}
    \end{enumerate}
    Here, $\Phi_{loc, SBI}^{\tau}(\cdot, \cdot, \cdot ;Y^{(n-1)}) : H_0 \times H_1 \times H_1 \to \RR \cup \{ \infty \}$ is defined as 
    \begin{equation*}
        \Phi_{loc, SBI}^{\tau}\left(X, Z, B ;Y^{(n-1)} \right) := 
        \Phi_{loc, split}^{\tau}\left(X, Z; Y^{(n-1)} \right) + \frac{\rho}{2} \norm{Z - \mathcal{W}(X) - Y^{(n-1)} - B}_{H_1}^2,
    \end{equation*}
    
    where $\|Z\|_{H_1}^2=\|Z_0\|_{H_\Delta}^2+\|Z_1\|_{H_{E\Omega_\Delta}}^2$ and $\|Z_1\|_{H_{E\Omega_\Delta}}^2$ is defined by
    \begin{align*}
    	\|Z_1\|_{H_{E\Omega_\Delta}}^2
    	=
    	\sum_{\gamma\in e(\Delta)}\|Z_1^\gamma\|_{\mathbb{R}^\ell}^2\mathcal{H}^{k-1}(E_\gamma).
    \end{align*}
\end{algorithm}

Here, we note that
\begin{enumerate}
	\renewcommand{\labelenumi}{(\roman{enumi})}
    \item both $X^{(k)}$ and $Z^{(k)}$ in the above iterations are solved explicitly when the orthogonal projection and the exponential map in $M$ have explicit formulae.
    This is the case, for example, for the class of orthogonal Stiefel manifolds, which includes the important manifolds $S^2$ and $SO(3)$.  
    \item $X^{(k)}$ converges to the minimizer of $(\mathrm{VP}_{loc};u_{\tau}^{(n-1)})$ in $H_0$ thanks to Corollary 2.4.10 in \cite{Setzer2009}.
\end{enumerate}

We explain details concerning the first point (i).
In step (a), we seek the global minimizer $X^{(k)}$ of the function
\begin{align*}
	\frac{1}{2}\|X\|_{L^2(\Omega_\Delta)}^2+\frac{\rho}{2}\left\|Z-\mathcal{W}(X)-Y^{(n-1)}-B\right\|_{H_1}^2,
	\quad
	X\in H_\Delta.
\end{align*}
Differentiating this function with respect to $X$, we see that the global minimizer can be characterized as the solution to the corresponding Poisson equation, which is a strictly diagonally dominant system and can be efficiently solved by using well-known solvers such as the Gauss--Seidel method.
In step (b), we first seek $Z_0^{(k)}$ which is the global minimizer in $H_{E\Omega_\Delta}$ of the function
\begin{align*}
	&\tau\|Z_0\|_{H_{E\Omega_\Delta}}+\frac{\rho}{2}\|Z_0-\bm{D}_\Delta X^{(k)}-\bm{D}_\Delta u_\tau^{(n-1)}-B_0^{(k-1)}\|_{H_{E\Omega_\Delta}}\\
	&=\sum_{\gamma\in e(\Delta)}\left(\tau\|Z_0^\gamma\|_{\mathbb{R}^\ell}+\frac\rho2\|Z_0^\gamma-(\bm{D}_\Delta X^{(k)})^\gamma-(\bm{D}_\Delta u_\tau^{(n-1)})^\gamma-(B_0^{(k-1)})^\gamma\|_{\mathbb{R}^\ell}^2\right)\mathcal{H}^{k-1}(E_\gamma).
\end{align*}
Since there are no interactions between $\{Z_0^\gamma\}_{\gamma\in e(\Delta)}$ in the above equation, the global minimizer can be obtained by computing it componentwise.
Thus, we need to obtain the global minimizer $x^*\in\mathbb{R}^\ell$ of the function $\tau\|x\|_{\mathbb{R}^\ell}+(\rho/2)\|x-y\|_{\mathbb{R}^l}^2$ ($y\in\mathbb{R}^\ell$), and we can explicitly write down the global minimizer $x^*\in\mathbb{R}^\ell$ as
\begin{align*}
	x^*=\shrink\left(y,\frac\tau\rho\right),
\end{align*}
where $\shrink(x,\gamma)$ ($x\in\mathbb{R}^\ell$, $\gamma\in\mathbb{R}$) is the shrinkage operator defined by
\begin{align*}
	\shrink(x,\gamma)
	=
	\frac{x}{\|x\|_{\mathbb{R}^\ell}}\max(\|x\|_{\mathbb{R}^\ell}-\gamma,0).
\end{align*}
As a result, each $(Z_0^{(k)})^\gamma$ is given by
\begin{align*}
	(Z_0^{(k)})^\gamma
	=
	\shrink\left(A^\gamma,\frac\tau\rho\right),
	\quad
	A^\gamma
	:=
	(\bm{D}_\Delta X^{(k)})^\gamma+(\bm{D}_\Delta u_\tau^{(n-1)})^\gamma+(B_0^{(k-1)})^\gamma.
\end{align*}
We finally compute $Z_1^{(k)}$, which is the global minimizer of the function
\begin{align*}
	\tau I_{T_{u_\tau^{(n-1)}}M_\Delta}(Z_1)+\frac{\lambda}{2}\|Z_1-X^{(k)}-B_1^{(k-1)}\|_{H_\Delta}^2.
\end{align*}

The global minimizer for this equation can be given by using the orthogonal projection as follows:
\begin{align*}
	Z_1^{(k)}
	=
	P_{u_\tau^{(n-1)}}(X^{(k)}+B_1^{(k-1)}).
\end{align*}
Summarizing the above, in the alternating split Bregman iteration, we do not need to solve minimization problems directly, and each step requires only solving a well-conditioned linear system, performing an algebraic manipulation and an orthogonal projection.

Finally, we state the proposed scheme with alternating split Bregman iteration $(\mathrm{MM}_{loc, SBI}; \tau, u_0)$:

\begin{algorithm}[$(\mathrm{MM}_{loc, SBI}; \tau, u_0)$: Proposed Scheme with Alternating Split Bregman Iteration]
    Let $u_0 \in M_{\Delta}$ and $\tau>0$ be a time step size.
    Then,  we define the sequence $\{u_{\tau}^{(n)} \}_{n=0}^{N(I, \tau)}$ in $M_{\Delta}$ by the following procedure:
    \begin{enumerate}
        \item For $n=0$: $u_{\tau}^{(0)}  := u_{0}$.
        \item For $n \geq 1$: $u_{\tau}^{(n)}$ is defined by the following steps:
        \begin{enumerate}
            \item Set $\displaystyle X_{\tau}^{(n-1)} := \lim_{k \to \infty} X^{(k)} \in T_{u_{\tau}^{(n-1)}}M_{\Delta}$, where the sequence $\{ X^{(k)} \}_{k=0}^{\infty}$ is  obtained by algorithm $(\mathrm{VP}_{\mathrm{loc,split}};u_\tau^{(n-1)})$.
            	
            \item Set $u_{\tau}^{(n)}=\Exp_{u_{\tau}^{(n-1)}}(X_{\tau}^{(n-1)})$.
        \end{enumerate}
    \end{enumerate}
\end{algorithm}

\begin{remark}
	As we have explained in the above, the computational cost of our method based on alternating split Bregman iteration is cheap.
	Indeed, as pointed out in \cite{Goldstein2009}, if we choose parameters properly, the number of iterations in Algorithm $(\mathrm{VP}_{loc,split};u_\tau^{(n-1)})$ becomes small; however, as far as we know, there are no mathematical guidelines on optimal choice of parameters (see \cite{Goldstein2009,GU} for detailed explanation on the choice of the initial values $Z^{(0)}$ and $B^{(0)}$ and the parameter $\rho$).
	
	For constrained TV flows, several numerical methods have been proposed based on the regularization of the total variation energy or Lagrange multipliers.
	If we impose a regularization of the total variation energy, the problem on singularity disappears but it becomes impossible to capture the steep edge structure and the facet-preserving phenomenon.
	When the target manifold $M$ is the sphere $S^{l-1}$, then it is not so difficult to obtain the unconstrained problem by introducing the Ginzburg--Landau functionals or Lagrange multipliers.
	However, if we focus on more complicated manifolds such as $SO(3)$, introducing these kinds of regularizations is not straightforward; also, it is not clear whether or not we can construct a practical numerical scheme to simulate it.
	Our approach does not regularize but merely convexify the total variation energy, and we adopted the orthogonal projection and the exponential map to obtain the solution at next time step.
	Namely, our method does not restrict the target manifold a priori and can be applied to a broad class of target manifolds.
\end{remark}

\section{Numerical results}
\label{sec:numerical_results}

In this section, we use the above scheme to simulate $S^2$ and $SO(3)$ valued TV flows, respectively.
Throughout the numerical experiments, we choose initial values $Z^{(0)}$ and $B^{(0)}$ in algorithm $(\mathrm{VP}_{loc,split};u_\tau^{(n-1)})$ as zeros, and set the parameter $\rho$ to be equal to $0.1$ (see \cite{Goldstein2009} for detailed explanation on the choices of the initial values $Z^{(0)}$ and $B^{(0)}$ and the parameter $\rho$).
Moreover, we terminate the iteration for computing $X_\tau^{(n-1)}$ when the relative error becomes less than $10^{-4}$: $\|X^{(k)}-X^{(k-1)}\|_{H_\Delta}<10^{-4}\|X^{(k)}\|$, where we have defined $X^{(0)}$ as zero.

\subsection{Numerical example (1): $M =S^2$} 

\subsubsection{The tangent spaces, orthogonal projections and exponential maps of $S^2$} 
We regard the 2-sphere as $S^2 := \{(x_1, x_2, x_3) \in \RR^3 \mid x_1^2 + x_2^2 + x_3^2 = 1\}$.
Then the tangent spaces, their orthogonal projections and exponential maps in $S^2$ are given by the following explicit formulae: 
\[
    \begin{split}
        T_x S^2 &= \{ v \in \RR^3 \mid x^{\top}v = 0 \}, \\
        \pi_x(v) &= (I_3 - xx^{\top})v, \\
        \exp_x(v) &= \exp(vx^{\top} - xv^{\top})x,
    \end{split}
\]	
where $I_3$ denotes the identity matrix in $\RR^3$ and $\exp$ denotes the matrix exponential; here $x=(x_1,x_2,x_3)^{\top}$ is a column vector and $x^{\top}$ denotes its transpose.

\subsubsection{Euler angles} 
Vectors in $S^2$ have three parameters.
Euler angle representation is beneficial to reduce parameters of $S^2$.
Given $\gamma := (x, y, z) \in S^2$, its Euler angle representation is given  
 as follows: 
\begin{equation*}
    (x, y, z)  := (\sin \theta\sin\phi, \sin\theta\cos\phi, \cos\theta),
\end{equation*}
where $(\theta, \phi) \in [0, \pi) \times [0, 2\pi)$ are the Euler angles of $\gamma$ which are given by the formula
\begin{equation}
    (\theta, \phi) := \left(\arccos(z), \  \mathrm{sign}(x)\arccos\left(\frac{y}{\sqrt{x^2 + y^2}}\right) \right).
\end{equation}

\subsubsection{Counterexample to finite-time stopping phenomena} 
In~\cite{Giga2015}, an  example of constrained TV flow which does not reach the stationary point in finite time is shown.
Here is the statement.
\begin{theorem}[\cite{Giga2015}]
    Let $a,b \in S^2$ be two points
    represented by $a=(a_1,a_2,0)$ and $b=(a_1,-a_2,0)$
    for some $a_1,a_2 \in [-1,1]$
    with $a_1^2 + a_2^2 = 1$ and $a_1 > 0$.
    Take arbitrary $h_0 \in S^2 \cap \{x_2 = 0\}$ whose $x_3$-coordinate does not vanish.
    Then for any $L > 0$ and $0 < \ell_1 < \ell_2 < L$,
    the TV flow $u:[0, \infty) \to L^2((0,L); S^2)$ starting from the initial value
    \[
        u_0 = a 1_{ \left( 0 , \ell_1 \right) } %
        + h_0 1_{ \left( \ell_1, \ell_2 \right) } %
        + b 1 _ { \left( \ell_2 , L \right) }
    \]
    can be represented as
    \begin{equation}\label{eq:GigaKuroda}
        u(t) = a 1_{ \left( 0 , \ell_1 \right) } %
        + h(t) 1_{ \left( \ell_1, \ell_2 \right) } %
        + b 1 _ { \left( \ell_2 , L \right) }
    \end{equation}
    and $h(t)$ converges to $(1,0,0)$ as $t \to \infty$ but does not reach it in finite time.
\end{theorem}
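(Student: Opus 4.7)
The plan is to construct the claimed solution explicitly in the three-piece form, reduce by symmetry to a scalar ordinary differential equation for the middle value $h(t)$, and analyse that equation to obtain long-time convergence without finite-time stopping.

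First, I would exploit the reflection $R\colon(y_1,y_2,y_3)\mapsto(y_1,-y_2,y_3)$, which is a linear isometry of $\RR^3$ preserving $S^2$, sending $a\leftrightarrow b$ and fixing $h_0$. Because $R$ preserves the total variation and the sphere constraint, it is natural to look for a solution of the form $u(t)=a\mathbf{1}_{(0,\ell_1)}+h(t)\mathbf{1}_{(\ell_1,\ell_2)}+b\mathbf{1}_{(\ell_2,L)}$ with $h(t)\in S^2\cap\{y_2=0\}$. The facet-preserving character of the one-dimensional TV flow keeps the partition unchanged, and the identity $|h-a|=|b-h|$ makes the calibrations at the two jumps compatible with $a,b$ remaining stationary, so that the PDE reduces to determining the single value $h(t)$.

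Second, I would derive the governing ODE. Writing $h(t)=(\cos\alpha(t),0,\sin\alpha(t))$ with $\alpha(0)=\alpha_0\in(0,\pi)$, the unit-vector calibrations at the two jumps are
\[
    \xi_1=\frac{h-a}{|h-a|},\qquad \xi_2=\frac{b-h}{|b-h|},
\]
with common denominator $\sqrt{2-2a_1\cos\alpha}$. A direct computation using $a+b-2h=(2a_1-2\cos\alpha,0,-2\sin\alpha)$ and $\langle a+b-2h,h\rangle=2(a_1\cos\alpha-1)$ gives
\[
    \pi_h(\xi_2-\xi_1)=\frac{2a_1\sin\alpha}{\sqrt{2-2a_1\cos\alpha}}\,e_\perp,\qquad e_\perp:=(\sin\alpha,0,-\cos\alpha)\in T_hS^2.
\]
Since the constrained $L^2$-gradient flow on the middle piece reads $\dot h=\pi_h(\xi_2-\xi_1)/(\ell_2-\ell_1)$, and $\mathrm{d}h/\mathrm{d}\alpha=-e_\perp$, equating the two expressions yields
\[
    \dot\alpha=-\frac{2a_1\sin\alpha}{(\ell_2-\ell_1)\sqrt{2-2a_1\cos\alpha}},\qquad\alpha(0)=\alpha_0.
\]

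Third, I would analyse this ODE. Because $a_1>0$ and $\sin\alpha>0$ for $\alpha\in(0,\pi)$, the function $\alpha(t)$ is strictly decreasing and stays in $(0,\pi)$, so it converges to some $\alpha_\infty\in[0,\pi)$, and passing to the limit in the ODE forces $\alpha_\infty=0$. Near the equilibrium the right-hand side is Lipschitz and satisfies $\dot\alpha\sim-c\,\alpha$ with $c=\sqrt{2}\,a_1/\bigl((\ell_2-\ell_1)\sqrt{1-a_1}\bigr)>0$, so $\alpha(t)$ decays exponentially; by uniqueness for this Lipschitz Cauchy problem at the equilibrium, any solution starting with $\alpha_0>0$ stays strictly positive for every finite $t$. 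Translating back, $h(t)\to(1,0,0)$ as $t\to\infty$ but is never reached in finite time. The main obstacle is the first step: verifying rigorously that the three-piece ansatz with fixed outer values is a weak solution of the TV flow in the sense used in \cite{Giga2015}. This amounts to producing an explicit calibration $\xi\colon[0,L]\to\RR^3$ with $|\xi|\le 1$, Neumann endpoint values $\xi(0)=\xi(L)=0$, prescribed jump traces $\xi_1,\xi_2$ at $\ell_1,\ell_2$, and such that $-\xi'$ projects to zero on $T_aS^2$ and $T_bS^2$ while producing the correct velocity $\dot h$ on the middle interval; once this calibration is in hand, the ODE analysis above closes the proof.
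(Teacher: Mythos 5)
Your reduction to the scalar ODE is correct and can be checked directly against the paper: writing $h=(\cos\alpha,0,\sin\alpha)$, your equation $\dot\alpha=-2a_1\sin\alpha/\bigl(c\sqrt{2-2a_1\cos\alpha}\bigr)$ with $c=\ell_2-\ell_1$ is exactly equivalent to the system \eqref{eq:ode_h} quoted in the paper from \cite{Giga2015}, and your computation of $\pi_h(\xi_2-\xi_1)$ is right. The qualitative analysis (monotone decrease of $\alpha$, limit $\alpha_\infty=0$, Lipschitz right-hand side near the equilibrium since $\sqrt{2-2a_1\cos\alpha}\ge\sqrt{2}\sqrt{1-a_1}>0$ when $a\ne b$, hence $\alpha(t)\ge\alpha_0e^{-Ct}>0$ for all finite $t$) is sound and delivers the stated conclusion. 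Note that the paper itself offers no proof of this theorem — it is imported from \cite{Giga2015} — so the only internal consistency check available is agreement with \eqref{eq:ode_h}, which you pass.

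The genuine problem is the step you defer to the end, and it is not merely unfinished: as you have framed it, it would fail. You ask for a calibration $\xi$ with Neumann endpoint values $\xi(0)=\xi(L)=0$ such that $-\xi'$ projects to zero on $T_aS^2$ and $T_bS^2$. No such $\xi$ exists: on the first cell the trace of $\xi$ at $\ell_1$ is forced to be $\xi_1=(h-a)/|h-a|$, so the velocity of that cell is $\ell_1^{-1}\pi_a(\xi_1)=\bigl(h-\langle h,a\rangle a\bigr)/(\ell_1|h-a|)$, which vanishes only if $h=\pm a$; since $h_3\ne0$ this never happens, so under Neumann conditions the outer pieces would move and the representation \eqref{eq:GigaKuroda} with constant $a,b$ would be false. (The identity $|h-a|=|b-h|$ gives you the common denominator but has nothing to do with stationarity of $a$ and $b$.) The resolution, which the paper states explicitly in the remark following the theorem, is that this example is a \emph{Dirichlet} problem: the cells touching the boundary are frozen by the constraint ($X|_{\Omega_\alpha}=0$ for boundary cells), so $a$ and $b$ are stationary by fiat, there is no outer calibration to construct, and the entire evolution reduces to the middle-cell equation you derived. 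With that correction — and the observation that $h_3(t)$ never vanishes, so the two jumps persist and the three-piece structure is preserved — your argument closes.
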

In this theorem, $h(t)=(h_1(t),0,h_3(t))$ satisfies the following system of  differential equations: 
\begin{equation}\label{eq:ode_h}
    \frac{\mathrm{d}}{\mathrm{d}t} \left( h_1, h_3 \right) %
    = -\frac{\sqrt{2} a_1}{ c\sqrt{1-a_1 h_1}} \left(h_1^2-1 , h_1 h_3 \right),
\end{equation}
which  can be calculated numerically. 
Here $c=\ell_2-\ell_1$. 
Therefore we use it as a benchmark for the validation of our algorithm. 

\begin{remark}[Dirichlet problem] 
So far, in this paper, we have considered the Neumann problem of constrained TV flows, while this  example solves  the Dirichlet problem. 
Therefore, we can not apply the proposed scheme directly.
However, we can derive  the Dirichlet problem version of the proposed scheme just by replacing $T_{u_{\tau}^{(n)}}M_{\Delta}$ by
\begin{equation*}
    V(u_{\tau}^{(n)}) := \{ X \in T_{u_{\tau}^{(n)}}M_{\Delta} \mid X|_{\Omega_{\alpha}} = 0 \ \mbox{for} \ \alpha \in \partial \Delta \},
\end{equation*}
where
\begin{equation*}
    \partial \Delta := \{ \alpha \in \Delta \mid \mathcal{H}^{k-1}(\overline{\partial \Omega_{\alpha}} \cap \overline{\partial \Omega}) \not= 0 \}.
\end{equation*}
For more on  the Dirichlet problem, see \cite{Giga2005}.
\end{remark}

\subsubsection{Setup and numeral results} 
We use the following initial data $u_0$ with the Euler angles $\theta, \phi: \Omega= (0,1)\times (0,1) \to \RR$.
\begin{gather}
   \theta := \sum_{i=0}^{2}\bm{\theta}_{i}\mathbf{1}_{I_{i}}, \quad
    \phi := \sum_{i=0}^{2} \bm{\phi}_{i}\mathbf{1}_{I_{i}},
\end{gather}
where  
\begin{equation*}
  \bm{\theta} = \left( \frac{\pi}{2}, \frac{\pi}{4},  \frac{\pi}{2} \right), \quad
    \bm{\phi} = \left( \frac{\pi}{4},  \frac{\pi}{2},  \frac{3}{4} \pi \right),
\end{equation*}
and
\begin{gather*}
    I_0 = \left(0, \frac{2}{5}\right), \quad I_1 = \left(\frac{2}{5}, \frac{3}{5}\right), \quad I_2 = \left(\frac{3}{5}, 1\right).
\end{gather*}

We define the initial values  $a,b,h$ as
\[
    a = \left(\frac{1}{\sqrt{2}},\frac{1}{\sqrt{2}},0\right),\ %
    b = \left(\frac{1}{\sqrt{2}},-\frac{1}{\sqrt{2}},0\right),\ %
    h_0=\left(\frac{1}{\sqrt{2}},0,\frac{1}{\sqrt{2}}\right)
\]
and compare the results of our scheme with $\tau = 10^{-1}, 10^{-2}, 10^{-3}, 10^{-4}$.
We set the number of divisions in $[0,1]$ as 100 and used the explicit Euler method to solve the ordinary differential equation~\eqref{eq:ode_h} with the step size $10^{-6}$.

As we can see from Figure~\ref{fig:compared}, the behavior of the approximate solution computed by our proposed scheme and the one of the solution for \eqref{eq:ode_h} look similar. 
Figure~\ref{fig:logtaulogl2error} depicts the dependence of $\|u_\tau(t)-u(t)\|_{H_\Delta}$ on $\tau$ at time $t=0.2$ in $\log$-$\log$ scale. 
 We can see from this graph that the $L^2$ error decreases with the order $O(\tau)$ as $\tau$ tends to $0$ and it is faster than the $O(\sqrt{\tau})$-error estimate in Theorem~\ref{thm:mainerror}, which suggests that there is still room for improvement in our error estimate. 

\begin{figure}[btp]
    \begin{minipage}{.5\hsize}
		\includegraphics[width=\hsize]{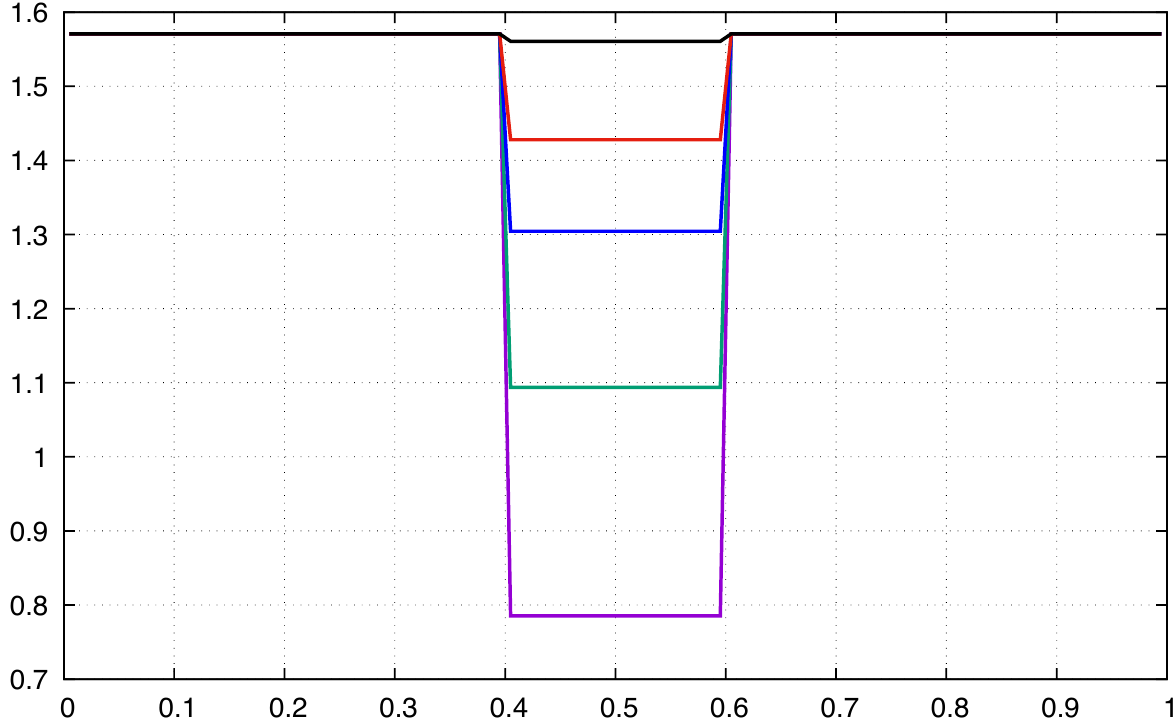}
    \end{minipage}%
    \begin{minipage}{.5\hsize}
    	\includegraphics[width=\hsize]{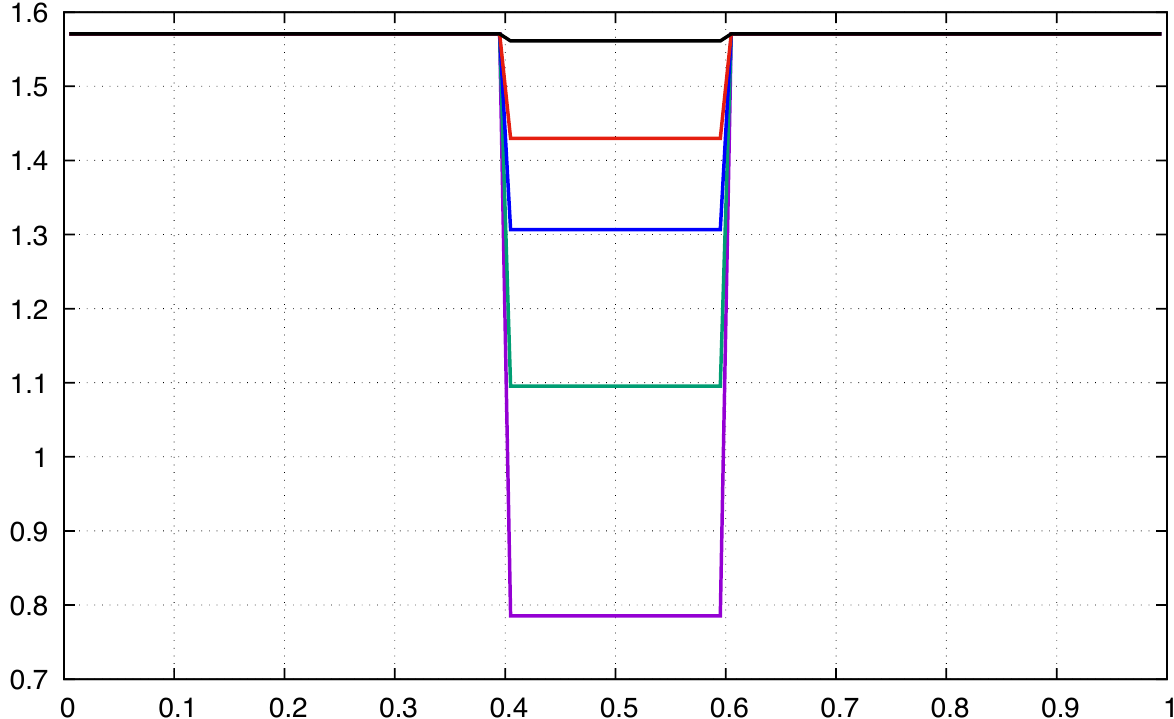}
    \end{minipage}%
    \caption{Comparison of flows at $t = 0.0,\ 0.07,\ 0.14,\ 0.21,\ 0.5$: The vertical axis represents the Euler angle $\theta$ of the flow. The left side is computed by our scheme, and the right side is computed by using explicit form in~\cite{Giga2015}.}
    \label{fig:compared}
\end{figure}
\begin{figure}[btp]
    \centering
    \includegraphics[width=.7\hsize]{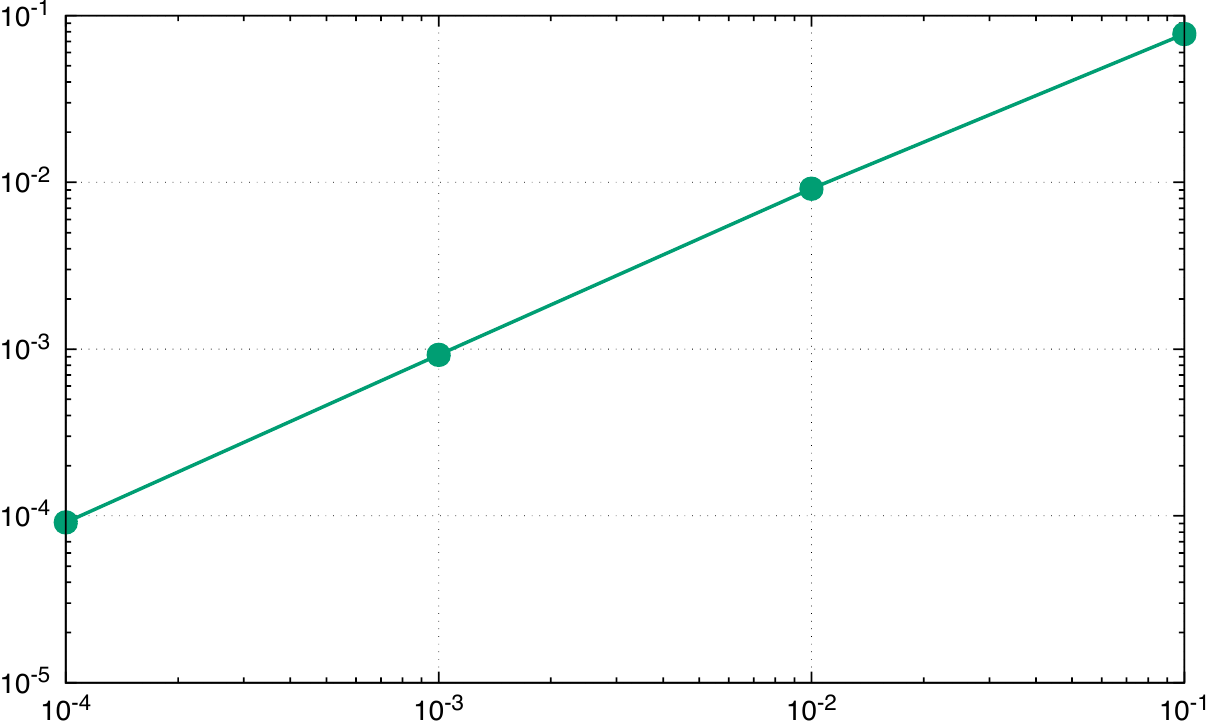}
    \caption{The $L^2$-error between our numerical scheme and the result in~\cite{Giga2015} at $t = 0.2$ with respect to the step size $\tau$.}
    \label{fig:logtaulogl2error}
\end{figure}

\subsection{Numerical example (2): $M = SO(3)$} 

\subsubsection{The tangent spaces, orthogonal projections and exponential maps of $SO(3)$} 

Let $M(3)$ denote the linear space of all three-by-three matrices. 
$SO(3)$ denotes the group of rotations, i.e., 
\begin{equation*}
    SO(3) := \{ x \in M(3) \mid x^{\top}x = xx^{\top} = I_3,\ \det x=1 \},
\end{equation*}
where $I_3$ denotes the identity matrix.
Then $SO(3)$ is regarded as a matrix Lie group in $M(3)$.
The set of all three-dimensional skew-symmetric matrices give the associated  Lie algebra to $SO(3)$:
\begin{equation*}
	so(3) := \{ X \in M(3)\mid X^{\top} = - X \}.
\end{equation*}
According to the general theory of Lie groups, 
$so(3)$ can be regarded as the tangent space $T_{I_3}(SO(3))$ at the identity, so  that
\begin{equation*}
	so_x(3) := T_x(SO(3)) = \{ xX \mid X \in so(3) \}.
\end{equation*}
We equip $M(3)$ with the inner product 
\[
	(X,Y) := \operatorname{trace}(X^{\top}Y) \quad\text{for}\quad X, Y \in M(3).
\]
Then it induces a Riemannian metric on the submanifold $SO(3)$, which is invariant by the left- and right-translation of $SO(3)$.
 The exponential map $\exp_e$ with respect to  
a bi-invariant Riemannian metric at the identity of a compact Lie group is given by the exponential map of a Lie algebra (\cite[Ch.\ I\!V, Theorem 3.3]{He}) or equivalently, that of a matrix, hence 
\[
	\exp_e(X) = \exp(X) \quad\text{for}\quad X \in T_e SO(3) \simeq so(3),
\]
where $e=I_3$ in this matrix group.
 Since the induced Riemannian metric is left-invariant, the exponential map at $x \in SO(3)$ is of the form
\begin{align*}
	\exp_x(X) &= L_x \exp_x(x^{-1}X) \\
	&= L_x \exp(x^{\top}X)
\end{align*}
for $X \in T_x\left(SO(3)\right) \simeq so_x(3)$, where $L_x$ denotes the left translation of $SO(3)$ given by $g \mapsto xg$.
 In other words, the exponential map  is given in the following simple form
\begin{equation*}
	\exp_x(X) = x\exp(x^{\top}X),  \quad X\in so_x(3).
\end{equation*}
 Since the decomposition
\begin{align*}
	&M(3) = so_x(3) + \left\{ xY \bigm| Y^{\top}=Y \right\},\\
	&X \mapsto \frac{X-xX^{\top}x}{2}+\frac{X+xX^{\top}x}{2}
\end{align*}
is a direct orthogonal decomposition of $M(3)$, for arbitrarily fixed $x \in SO(3)$, the orthogonal projection $\pi_x \colon M(3) \to so_x(3)$ is given by
\begin{equation}
	\pi_x(X) := \frac{X-xX^{\top}x}{2},  \quad  X \in M(3).
\end{equation}

\subsubsection{Euler angle} 
Rotations in $\RR^3$ have nine components.
The Euler angle (or Euler axis) representations are instrumental in reducing parameters of rotations.
Given $R := (R_{i,j})_{i,j=1}^3 \in SO(3)$, its Euler angle (or Euler axis) representation is given by Rodrigues' rotation formula
\begin{equation}
		R = \cos\theta I_3 + (1-\cos \theta)ee^{\top} + \sin \theta [e ]_x,
\end{equation}
where $\theta \in [0, 2\pi)$ and $e := (e_1, e_2, e_3) \in S^2$ denote the Euler angle and Euler  axis of $R$, respectively, and $[e]_x$ is the cross  product matrix of $e$.
The following formulae give them:
\begin{equation}
    \theta = \arccos{\left( \frac{R_{1,1} + R_{2,2} + R_{3,3} - 1}{2} \right)},
\end{equation}
\begin{equation}
    (e_1, e_2, e_3)  := \left( \frac{R_{3,2} - R_{2,3}}{2\sin \theta}, \quad  \frac{R_{1,3} - R_{3,1}}{2\sin \theta}, \quad \frac{R_{2,1} - R_{
1,2}}{2\sin \theta} \right),
\end{equation}
and
\[
  [e]_x = \left(
    \begin{array}{ccc}
      0 & -e_3 & e_2 \\
      e_3 & 0 & -e_1 \\
      -e_2 & e_1 & 0
    \end{array}
  \right).
\]
Since the Euler axis $e$ is in $S^2$, $e$ is represented by the spherical Euler angles $(\phi, \psi) \in [0,\pi)\times[0,2\pi)$.
Therefore, $R$ is represented by three parameters $(\theta, \phi, \psi) \in [0,2\pi) \times [0,\pi)\times[0,2\pi)$.

\subsubsection{Setup and numerical  results} 
We use the following initial data $u_0$ with the Euler angles $\theta, \phi, \psi :  \Omega= (0,1)\times (0,1) \to \RR$:
\begin{gather}
    \theta := \sum_{i, j=0}^{2}\bm{\theta}_{i,j}\mathbf{1}_{I_{i}\times J_{j}}, \quad
    \phi := \sum_{i, j=0}^{2} \bm{\phi}_{i,j}\mathbf{1}_{I_{i} \times J_{j}}, \quad
    \psi := \sum_{i, j=0}^{2} \bm{\psi}_{i,j}\mathbf{1}_{I_{i} \times J_{j}},
\end{gather}
where  
\begin{gather*}
  \bm{\theta} = \left(
    \begin{array}{ccc}
      0.35\pi & 0.2\pi & 0.55\pi \\
      0.81\pi & 0.64\pi & 0.4\pi \\
       0.1\pi & 0.7\pi & 0.3\pi
    \end{array}
  \right), \quad
    \bm{\phi} = \left(
    \begin{array}{ccc}
      0.4\pi & 0.5\pi & 0.7\pi \\
      0.5\pi & 0.3\pi & 0.4\pi \\
       0.6\pi & 0.3\pi & 0.4\pi
    \end{array}
  \right), \\
  \quad
    \bm{\psi} = \left(
    \begin{array}{ccc}
      0.2\pi & 0.25\pi & 0.3\pi \\
      0.25\pi & 0.225\pi & 0.2\pi \\
       0.3\pi & 0.2\pi & 0.35\pi
    \end{array}
  \right),
\end{gather*}
and
\begin{align*}
    &I_0 = \left(0, \frac{2}{5}\right), \quad I_1 = \left(\frac{2}{5}, \frac{3}{5}\right), \quad I_2 = \left(\frac{3}{5}, 1\right), \\
    &J_0 = \left(0, \frac{1}{5}\right), \quad J_1 = \left(\frac{1}{5}, \frac{4}{5}\right), \quad J_2 = \left(\frac{4}{5}, 1\right).
\end{align*}

Figures~\ref{fig:SO3_1} and \ref{fig:SO3_2} depict the results of numerical experiments under the above setting at times  $t=0.0, 0.05, 0.1, 0.25$ and $t=0.0, 0.5, 0.75, 1.0$, respectively, where $\Omega$ is divided as $\bigcup_{i=1}^{N_x}\bigcup_{j=1}^{N_y}\Omega_{i,j}$, $\Omega_{i,j}:=((i-1)\Delta x,i\Delta x)\times((j-1)\Delta y,j\Delta y)$, in which we have defined both of $N_x$ and $N_y$ as $25$ and $\Delta x=1/N_x$ and $\Delta y=1/N_y$.
Although no  benchmark test for $SO(3)$-valued TV flow is known,  we can see that our proposed numerical scheme works well since the facet-preserving property is satisfied, and the numerical solution finally reaches the constant solution.

\appendix
\section{About the  constant $C_M$}\label{sec:the_constant} 

In this section, we derive  a bound of the constant $C_M$ defined  in \eqref{eq:Curv+LSF}.
We recall several notations developed in computational geometry.
A point $x \in \RR^{\ell}$ is said to have the unique nearest point if there exists a unique point $p(x) \in M$ such that $p(x) \in \argmin_{p \in M}\norm{x - p}_{\RR^{\ell}}$.
Let $S_0(M)$ denote the set of all points in $\RR^{\ell}$ which \textit{do not} have the unique nearest point.
The closure $S(M)$ of $S_0(M)$ is called the medial axis of ${M}$.
The local feature size $\lfs(M)$ of ${M}$ is the quantity defined by
\begin{equation*} 
	\lfs({M}) := \inf_{p \in {M}}\inf_{q \in S_0(M)}\norm{p - q}_{\RR^{\ell}}.
\end{equation*}
Now, we assume that $M$ is compact. 
Then, $\lfs({M})$ is positive because $M$ has positive reach (\cite{Foote1984}).
We use the quantity $\mathrm{lfs}(M)$ to obtain that 
\begin{equation}\label{eq:LFS}
	\norm{p - q}_{\RR^{\ell}} \leq \Dist_{{M}}(p,q) 
	\leq 2\max \left\{ 1, \frac{ \mathrm{Diam}({M})}{\lfs({M})} \right\} \norm{p - q}_{\RR^{\ell}}
\end{equation}
for each point $p$ and $q$ in $M$.
Here, $\mathrm{Dist}_M(p,q)$ denotes the geodesic distance between $p$ and $q$.
On the other hand, assuming that $M$ is path-connected, we have
\begin{equation}\label{eq:Curv}
    \| \pi^{\bot}_p(p - q) \|_{\RR^{\ell}} \leq \frac{1}{2} \ \mathrm{Curv}(M) \  \mathrm{Dist}_M(p,q)^2
\end{equation}
for all $p, q \in M$.
Therefore, if $M$ is a path-connected and a compact submanifold of $\RR^{\ell}$, then we combine  the inequalities \eqref{eq:Curv} and \eqref{eq:LFS} to obtain 
\begin{equation}\label{eq:Curv+LFS}
	\norm{\pi^{\bot}_p(p-q)}_{\RR^{\ell}}
    \leq  2 \ \mathrm{Curv}({{M}}) \ \max \left\{ 1, \frac{ \mathrm{Diam}({M})}{\lfs({M})} \right\}^2 \norm{p - q}_{\RR^{\ell}}^2
\end{equation}
for all $p, q \in M$.
Hence, we have 
\begin{equation}
    C_M \leq 2 \ \mathrm{Curv}({{M}}) \ \max \left\{ 1, \frac{ \mathrm{Diam}({M})}{\lfs({M})} \right\}^2.
\end{equation}
Finally, we remark that the proofs of \eqref{eq:LFS} and \eqref{eq:Curv} are found in \cite{Taguchi2018}.

\begin{figure}[p]
    \centering
    \includegraphics[width=1.0\hsize]{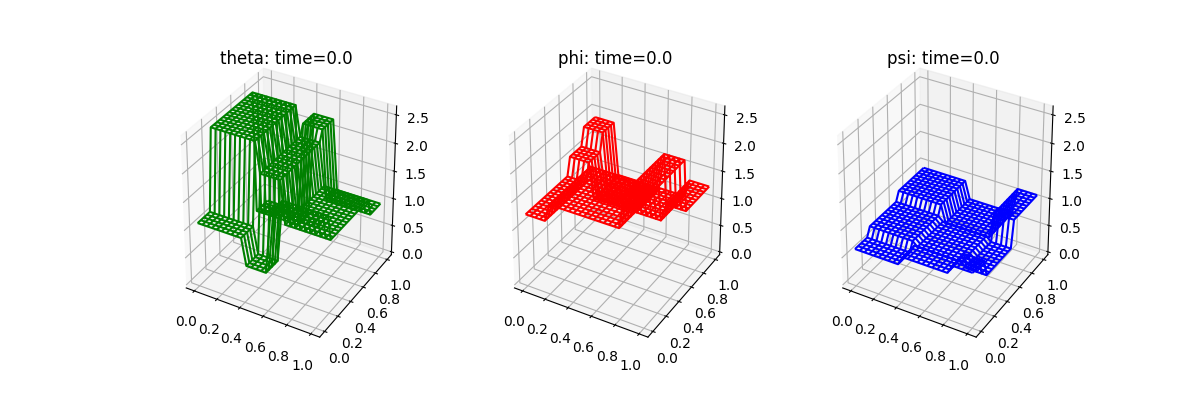}
    \includegraphics[width=1.0\hsize]{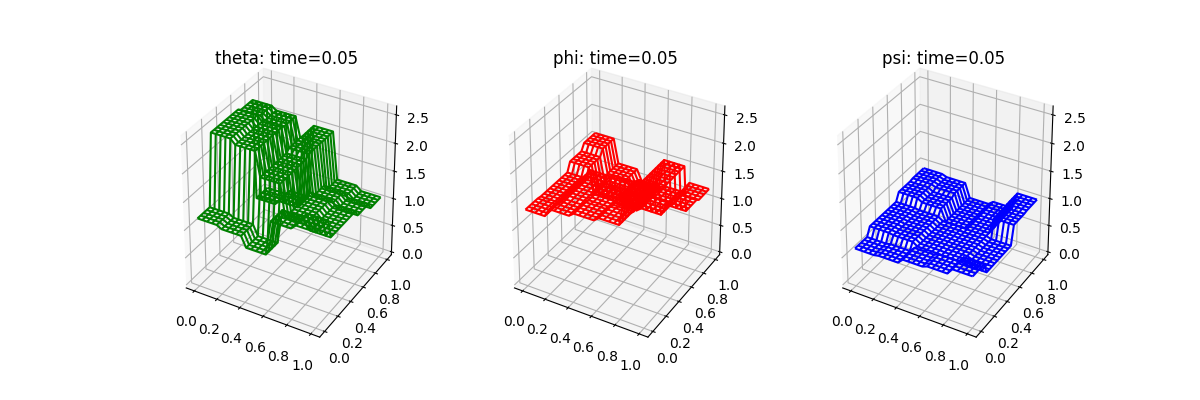}
    \includegraphics[width=1.0\hsize]{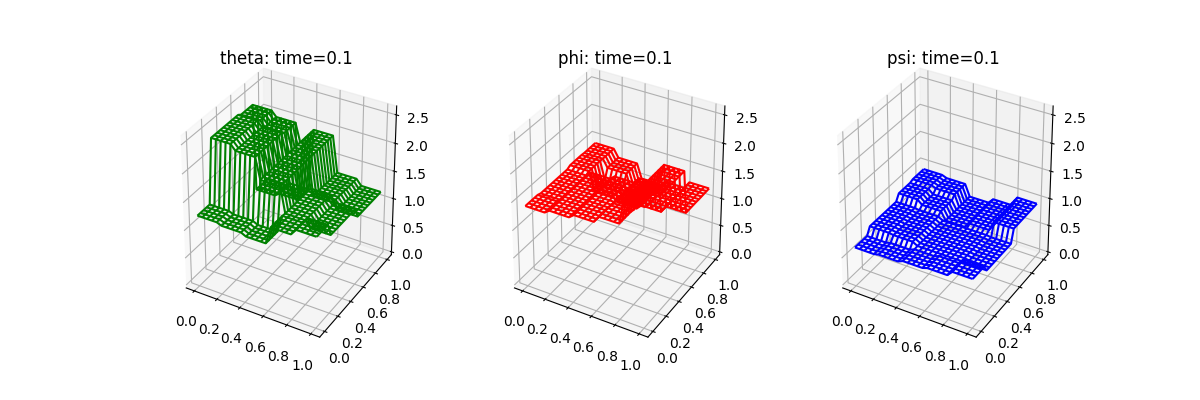}
    \includegraphics[width=1.0\hsize]{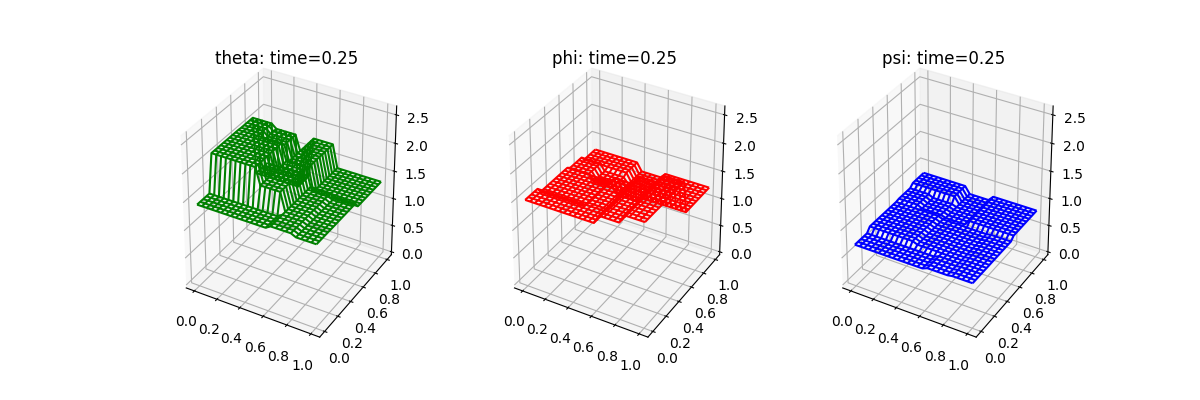}
    \caption{Numerical simulation of $SO(3)$-valued TV flow at $t = 0.0, 0.05, 0.1, 0.25$.}
    \label{fig:SO3_1}
\end{figure}
\begin{figure}[p]
    \centering
    \includegraphics[width=1.0\hsize]{SO3_0.png}
    \includegraphics[width=1.0\hsize]{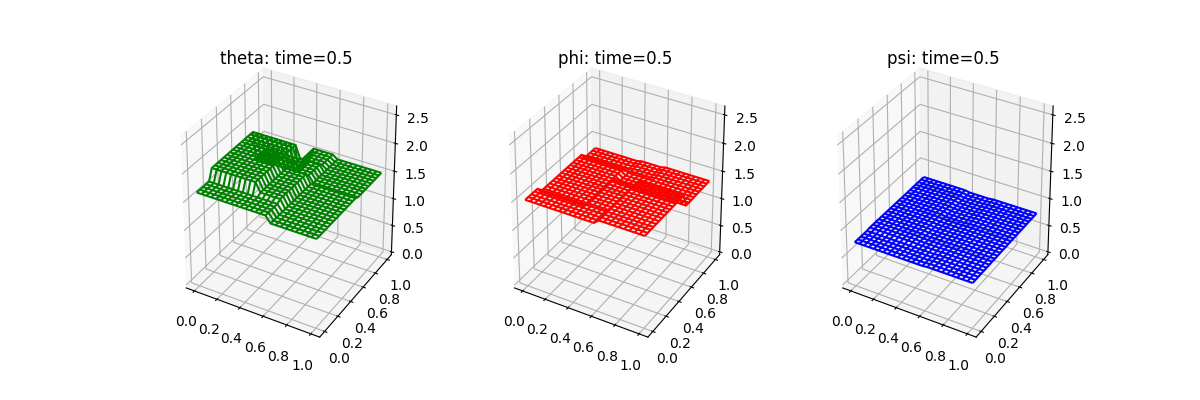}
    \includegraphics[width=1.0\hsize]{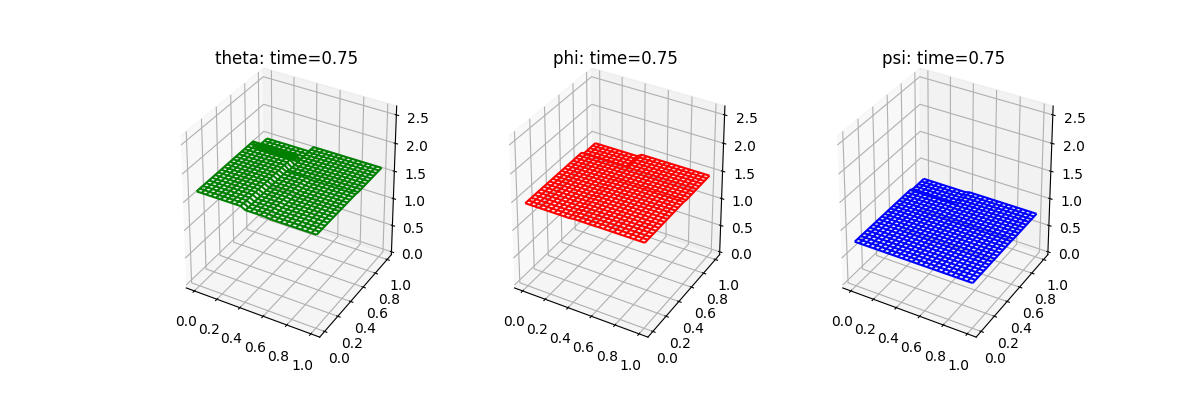}
    \includegraphics[width=1.0\hsize]{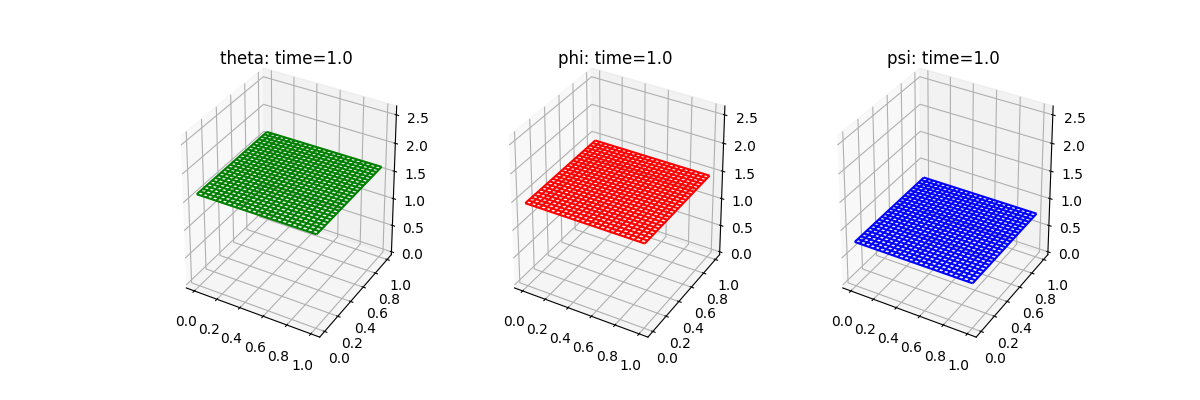}
    \caption{Numerical simulation of $SO(3)$-valued TV flow at $t = 0.0, 0.5, 0.75, 1.0$.}
    \label{fig:SO3_2}
\end{figure}


\end{document}